\newtheorem{thm}{Theorem}[section]
\newtheorem{lem}[thm]{Lemma}
\newtheorem{wn}[thm]{Corollary}
\newtheorem{obs}[thm]{Observation}
\newcommand{\gwc}{\gamma_{\rm wcon}}
\newcommand{\gc}{\gamma_c}
\begin{document}
\title[On the connected and weakly convex domination numbers]{On the connected and weakly convex domination numbers}
\author[Dettlaff]{Magda Dettlaff}
\address{Gda{\'n}sk University of Technology, Gda{\'n}sk, Poland}
 {\email{mdettlaff@mif.pg.gda.pl}
\author[Lema\'nska]{Magdalena Lema\'nska}
\address{Gda{\'n}sk University of Technology, Gda{\'n}sk, Poland}
 \email{magda@mif.pg.gda.pl}
  \author[Osula]{Dorota Osula}
\address{Faculty of Electronics, Telecommunications and Informatics, Gda{\'n}sk University of Technology, Gda{\'n}sk, Poland}
 {\email{dorurban@student.pg.edu.pl}

\author[Souto-Salorio]{Mar\'ia Jos\'e Souto-Salorio}
\address{Facultade de Informatica, Campus de Elvi\~{n}a,
Universidade da Coru\~{n}a, CP 15071, A Coru\~{n}a, Espa\~{n}a}
{\email{maria.souto.salorio@udc.es}
\date{\today}

\keywords{connected dominating set, weakly convex dominating set,  }

\thanks{Supported by TIN2017-85160-C2-1-R   projects    funds (Spain) and National Science Centre grant number 2015/17/B/ST6/01887 (Poland).}

\subjclass[2010]{ 05C05, 05C69}

\maketitle

 \begin{abstract}
In this paper we study relations between connected and weakly convex domination numbers.  We show that in general the difference between these numbers can be arbitrarily large and we focus on the graphs for which a weakly convex domination number equals a connected domination number. We also study the influence of the edge removing on the weakly convex domination number, in particular we prove that the weakly convex domination number is an interpolating function.\\
\end{abstract}


\section{Introduction}
 All graphs considered in this paper are finite, undirected, simple and connected. 
 Let   $G =(V ,E)$ be a connected graph of order $ \mid V \mid=n$, where $V=V(G)$ is the set of the vertices of $G$ and $E=E(G)$ denotes the set of edges of $G$.
For a vertex $v \in V$, the \emph{open neighbourhood} $N_{G}(v)$ is the set of all vertices adjacent to $v$ and the \emph{closed neighbourhood} $N_{G}[v]=N_G(v)\cup \{v\}$.  The \emph{degree} of a vertex $v$ is  $d_{G}(v)=|N_{G}(v)|.$ 
We say that a vertex $v$ is a {\em simplicial vertex} if $N_G[v]$ is a complete graph. 
A vertex $v$ is an \emph{end-vertex} (or a {\em leaf}) of $G$ if $v$ has exactly one neighbour in $G.$ The set of all end-vertices in $G$ is denoted by $V_L$ and $n_L=|V_L|$. 
A vertex $v$ is called a \emph{support} if it is adjacent to an end-vertex.  The set of all supports of $G$ is denoted by $V_S.$
{\it A cut-vertex}
in $G$ is a vertex $x\in V(G)$ such that the number of components of
 $G-\{x\}$ is bigger than the number of components of $G$. The set of all cut-vertices of $G$ is denoted by $V_C$.
Let $g(G)$ denotes the girth of $G$ that is, the length of the shortest cycle in $G$.

 A subset $D$ of $V$ is \emph{dominating} in $G$ if every vertex of $V-D$ has at least one neighbour in $D.$  The set  $D$ is \emph{connected dominating} in $G$ if  it is dominating and the subgraph $G[D]$ induced by $D$ is connected.  
The minimum cardinality of a connected dominating set of $G$ is a \emph{connected domination number} of $G$ and is denoted by $\gamma_c(G).$  

Dominating sets  have been intensively studied since the fifties and 
the main interest  is due to their relevance on both theoretical and practical side. 
Several variants of the classical concept of domination were obtained where additional conditions on the subgraph induced by the dominating set were added. Connected dominating sets,  introduced      in \cite{sampath},   are one of these variants, and have  useful applications   in the
wireless  networks   context (see \cite{kim}).   In addition,  the design of communication networks involves considering short distances between nodes in order to get quick transmition.

 The {\it distance} $d_{G}(u,v)$ between two
vertices $u$ and $v$ in a connected graph $G$ is the length of a
shortest $(u-v)$-path in $G.$ A $(u-v)$-path of length $d_{G}(u,v)$ is
called $(u-v)$-{\it geodesic}. 
The {\it diameter} of a graph  $G$, denoted as diam$(G)$}, is defined as the maximum of distance over all  pair of vertices.
A set $X$ is {\it weakly convex} in $G$ if for any two vertices
$a,b\in X$  there exists an $(a-b)$--geodesic such that all of its
vertices belong to $X$. 
A set $X\subseteq V$ is a {\it  weakly
convex dominating set}  if $X$ is weakly convex and dominating. 

The
{\it  weakly convex domination number of a graph $G$} 
denoted by
$\gamma_{\rm wcon}(G)$
equals to the minimum cardinality
of a weakly convex dominating set in $G.$
It
 was first introduced by Jerzy Topp  in 2002 and 
 formally defined and studied in
\cite{magda}. 
This concept improves the applications of  connected domination in the design of communication networks, by guarantee that the connections trough the nodes of the dominating set are the shortest.
 
  In this work we investigate the relationship between the weakly convex domination and the connected domination.
  Moreover, we     study  edge removing  and its effect on the weakly convex number  for some graphs. Related to this, we have the idea of interpolation.    {In  the 80s of the last century,  the study of the   interpolation properties started. It is often thought that the origin was the   problem whether a graph $G$  containing spanning trees having $k$ and $l$ 
end-vertices, respectively,   also must contain a spanning tree with $r$ end-vertices for
every integer $r$ such that $k < r < l$ . Several authors published some results of interpolation theorems on various kinds of graph parameters with respect to the set of all spanning trees
and some classes of spanning subgraphs of a given graph.} In this paper, we  conclude that for each connected graph $G,$ the image of the function $\gwc$ over the set of all spanning trees is    an  interval, what means  that $\gwc$ is an interpolating function.

The paper 
is organized as follows.
 In  Section 2, we present  relations  between the weakly convex domination and the connected domination. In  particular, 
 we look for conditions on the graph $G$ under which we get $\gwc(G)= \gc(G).$ First, we prove that there are graphs for which the difference between $\gwc(G)$ and $\gc(G)$ can be arbitrarily large.
 In the next section, we show some examples of families of graphs $G$ with  equality  $\gwc(G)=\gc(G)$, e.g.   distance-hereditary graphs (in particular block graphs) and  cacti graphs (in particular unicyclic graphs). 
  Moreover we  study the
  chordal graphs. This kind of graphs have an extensive literature and applications  (see for example \cite{chordal}, \cite{martinez} \cite{sch}). 
We complete Section 3 focussing our attention on induced subgraphs; more specifically, we give
  conditions for the  weakly convex domination number  to be equal to the connected domination number for a graph and every its induced subgraph.  
Section 4 is devoted to
 study the influence of the edge removing on the weakly convex domination number, in particular we show that a weakly convex domination number is an interpolating function.


\section{Connected and weakly convex domination numbers}
 
 This section we start with proving that there are graphs for which the difference between $\gwc$ and $\gc$ can be arbitrarily large.

  \begin{thm} For any $k\in \mathbb{N}$ and $k\geq 6$, there exists a graph $G$ such that $$\gwc(G)-\gc(G)=k.$$
\end{thm}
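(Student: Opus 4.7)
The plan is to exhibit an explicit family of graphs $G_k$ with $\gwc(G_k) - \gc(G_k) = k$. Take two adjacent vertices $u, v$, each carrying a private leaf ($p_u$ at $u$, $p_v$ at $v$), and for every $i \in \{1,\ldots,k\}$ attach an internally disjoint $u$-$v$ path $u\, a_i\, b_i\, c_i\, v$ of length four, hanging an extra leaf $p_i$ off the middle vertex $b_i$. The point of the construction is that $N(u) = \{v, p_u, a_1, \ldots, a_k\}$, $N(v) = \{u, p_v, c_1, \ldots, c_k\}$, and $N(b_i) = \{a_i, c_i, p_i\}$.

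To bound $\gc(G_k)$ I observe that the leaves $p_u, p_v, p_1, \ldots, p_k$ force $u, v, b_1, \ldots, b_k$ into every dominating set, and that connectedness of $G[D]$ forces at least one of the non-leaf neighbors $a_i, c_i$ of $b_i$ into $D$ for each $i$. This gives $\gc(G_k) \geq 2k + 2$, and the set $\{u, v\} \cup \{a_i, b_i : 1 \leq i \leq k\}$ realizes the bound.

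For $\gwc(G_k)$, the same leaf argument puts $u, v$ and every $b_i$ into any weakly convex dominating set $X$. The key observation is that $N(u) \cap N(b_i) = \{a_i\}$, so the unique $u$-$b_i$ geodesic is $u\, a_i\, b_i$, and weak convexity forces $a_i \in X$; symmetrically, using the pair $v, b_i$, one gets $c_i \in X$. Thus $|X| \geq 3k + 2$, and the candidate $Y := \{u, v\} \cup \{a_i, b_i, c_i : 1 \leq i \leq k\}$ clearly dominates $G_k$.

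The main obstacle is verifying that $Y$ is weakly convex. One has to run through the intra- and inter-arm pair types, but all relevant distances in $G_k$ are at most $4$ and each natural geodesic can be routed through $u$ or $v$ (for example a length-$4$ $b_i$-$b_j$ geodesic $b_i\, a_i\, u\, a_j\, b_j$, a length-$2$ $u$-$c_i$ geodesic $u\, v\, c_i$, a length-$3$ $a_i$-$c_j$ geodesic $a_i\, u\, v\, c_j$), all staying inside $Y$. This gives $\gwc(G_k) = 3k + 2$, whence $\gwc(G_k) - \gc(G_k) = k$; taking any $k \geq 6$ proves the theorem.
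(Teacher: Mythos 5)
Your proposal is correct, but it uses a genuinely different construction from the paper's. The paper builds its example around a long cycle $C_{2k+6}$ decorated with pendant vertices and the chords $v_ju_j$ (plus $v_1u_3$): there the gap arises because weak convexity forces the whole geodesic $u_3,\ldots,u_{k+2}$ between two far-apart forced vertices into the set, so a single long detour accounts for the entire difference $k$. Your graph instead joins two adjacent hubs $u,v$ by $k$ parallel internally disjoint paths of length four with a leaf on each midpoint, and the gap accumulates locally, one vertex per arm: a connected dominating set needs only one of $a_i,c_i$ per arm, while the unique-common-neighbour observation $N(u)\cap N(b_i)=\{a_i\}$ and $N(v)\cap N(b_i)=\{c_i\}$ forces both into any weakly convex dominating set. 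All the counting checks out ($\gc(G_k)=2k+2$, $\gwc(G_k)=3k+2$), and your verification that $Y$ is weakly convex covers all pair types correctly since every relevant distance is realized through $u$, $v$, or $b_i$ inside $Y$. Two remarks: your construction actually works for every $k\geq 1$, which is slightly stronger than the statement (the restriction $k\geq 6$ appears to be an artifact of the paper's particular cycle-based construction); and your lower-bound mechanism (forcing via unique common neighbours) is more local and mechanical to verify than the paper's distance argument along a long cycle, at the cost of a somewhat longer case check for weak convexity of the realizing set.
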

 
\begin{proof}
We begin with a cycle $C_{2k+6}=(x_1,v_1,v_2,\ldots ,v_{k+2},x_2,u_{k+2},u_{k+1},\ldots u_1, x_1).$ Next we add to this cycle: new vertices $x'_1,v'_1,v'_2,\ldots ,v'_{k+2},x'_2$ and edges $x_1x'_1,x_2x'_2$ and $v_iv'_i$ where $1\leq i\leq k+2$, and we also add edges $v_ju_j$ for $1\leq j\leq k+2$, and the edge $v_1u_3$. The final graph is illustrated in the Figure \ref{rys_2}.  Let $D$ be a minimum weakly convex dominating set of $G$. All supports of $G$ belong to $D$. Moreover, the distance between $v_1$ and $x_2$ equals to $k+1$ and $u_3,u_4,u_5,\ldots ,u_{k+2}$ belong to the shortest $(v_1-x_2)$-path, so they also belong to $D$. Hence  $\gwc (G)=2k+4$.

It is easy to observe that 
 $\{x_1,v_1,v_2,\ldots ,v_{k+2},x_2\}$ is the minimum connected dominating set of $G.$ Thus $\gc(G)=k+4$ and $\gwc(G)-\gc(G)=k.$ \end{proof}
\begin{figure}[h]
\begin{center} 
	\begin{tikzpicture}[scale=0.7]

\fill(0,1) circle(3pt);
\fill(10,1) circle(3pt);
\draw (0,1)--(0,2);
\draw (10,1)--(10,2);

\draw(1,2)--(3,0);

\filldraw[fill=white](0,2) circle(3pt);
\filldraw[fill=white](10,2) circle(3pt);
\draw (6,0)--(1,0)--(0,1)--(1,2)--(6,2);
\draw[dotted](6,2)--(8,2);
\draw (8,2)--(9,2)--(10,1)--(9,0)--(8,0);
\draw[dotted](6,0)--(8,0);

\draw(-0.3,2.3)node{$_{x'_1}$};
\draw(10.2,2.3)node{$_{x'_2}$};
				
\foreach \nn in {1,2,3,4,5,6,8,9}
			{	\filldraw[fill=black] (\nn,2) circle (3pt);\draw(\nn,2)--(\nn,3);\draw (\nn,0)--(\nn,2);
				\filldraw[fill=white] (\nn,3) circle (3pt);}
				\foreach \nn in {1,2,3,4,5,6}{
\draw(\nn-0.3,3.3)node{$_{v'_{\nn}}$};
				}\draw(8-0.55,3.3)node{$_{v'_{k+1}}$};\draw(9-0.5,3.3)node{$_{v'_{k+2}}$};
				\filldraw[fill=white] (1,0) circle (3pt);\filldraw[fill=white] (2,0) circle (3pt);
				\foreach \nn in {3,4,5,6,8,9}
				\filldraw[fill=black] (\nn,0) circle (3pt);
\draw(-0.2,0.6)node{$_{x_1}$};\draw(10.2,0.6)node{$_{x_2}$};
\draw(0.7,2.2)node{$_{v_1}$};\draw(1.7,2.2)node{$_{v_2}$};\draw(2.7,2.2)node{$_{v_3}$};\draw(3.7,2.2)node{$_{v_4}$};\draw(4.7,2.2)node{$_{v_5}$};\draw(5.7,2.2)node{$_{v_6}$};\draw(7.3,2.2)node{$_{v_{k+1}}$};\draw(8.5,2.2)node{$_{v_{k+2}}$};
\draw(1,-0.4)node{$_{u_1}$};\draw(2,-0.4)node{$_{u_2}$};\draw(3,-0.4)node{$_{u_3}$};\draw(4,-0.4)node{$_{u_4}$};\draw(8,-0.4)node{$_{u_{k+1}}$};\draw(9,-0.4)node{$_{u_{k+2}}$};\draw(5,-0.4)node{$_{u_5}$};\draw(6,-0.4)node{$_{u_6}$};
\end{tikzpicture}
\end{center}
\caption{Construction of a graph $G$ such that $\gwc(G)-\gamma_c(G)=k$. Black vertices denote the minimum weakly convex dominating set of $G$.}\label{rys_2}
\end{figure}
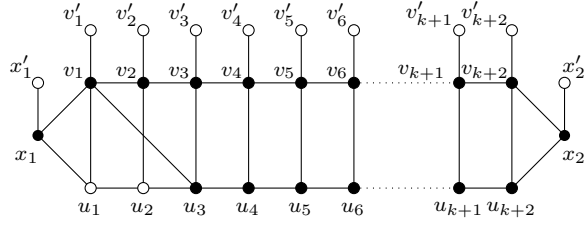
 
In \cite{sampath} the following result was proved for a connected domination number of any connected graph $G.$

\begin{thm} \label{sam} \cite{sampath}
For any connected graph $G$ with $n\geq 3$ vertices and $m$ edges is $\gamma_c(G)\leq 2m-n$ with equality if and only if $G$ is a path. 
\end{thm}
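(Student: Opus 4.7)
The plan is to split the argument by whether $G$ is a tree or contains a cycle, i.e., whether $m = n-1$ or $m \geq n$. The starting observation is the standard identity for trees: if $T$ is a tree on $n$ vertices with $\ell(T)$ leaves, then the set $V(T) \setminus V_L$ of non-leaves is a connected dominating set (removing a leaf from a tree keeps it connected, and every leaf is dominated by its unique neighbour), and conversely no leaf may be omitted without loss of domination of itself except through its support, so $\gamma_c(T) = n - \ell(T)$.

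Case 1, $m = n-1$: Then $G$ is a tree, and the identity above gives $\gamma_c(G) = n - \ell(G)$. Since any tree on $n \geq 3$ vertices has at least two leaves, $\gamma_c(G) \leq n - 2 = 2m - n$. Equality forces $\ell(G) = 2$; a tree with exactly two leaves is a path, which takes care of the ``only if'' direction of equality in this case, and conversely $\gamma_c(P_n) = n-2$ is immediate.

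Case 2, $m \geq n$: Pick any spanning tree $T$ of $G$. A connected dominating set of $T$ is still connected and dominating in $G$ (both $V(T)=V(G)$ and $E(T) \subseteq E(G)$), so $\gamma_c(G) \leq \gamma_c(T) \leq n-2$ by Case 1 applied to $T$. On the other hand, $2m - n \geq 2n - n = n > n - 2$, so the inequality $\gamma_c(G) \leq 2m-n$ is strict and equality cannot occur.

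There is no real obstacle here; the only point to verify with some care is the equality clause, where one must rule out non-path trees (handled by $\ell(T) \geq 3$) and all non-tree graphs (handled by the strict inequality $2m - n > n - 2$ when $m \geq n$). Together these two cases give both the bound and the characterisation of equality.
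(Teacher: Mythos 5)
Your proof is correct. Note, however, that the paper does not prove this statement at all: it is quoted from Sampathkumar and Walikar \cite{sampath}, so there is no in-paper argument to compare against. What can be compared is the proof the paper gives immediately afterwards for the weakly convex analogue ($\gwc(G)\leq 2m-n$ with equality iff $G$ is a path or a cycle $C_p$, $p\geq 7$), and your structure matches it closely: both split into the tree case, where the leaf-count identity $n-n_L$ combined with $m=n-1$ gives $2m-n+2-n_L\leq 2m-n$ with equality exactly for two leaves, and the non-tree case, where $m\geq n$ forces the bound. The one genuine difference is in the non-tree case: the paper's weakly convex argument must separately treat $\gwc(G)=n$ (which happens for long cycles and produces the extra equality class), whereas you exploit the fact that $\gamma_c(G)\leq\gamma_c(T)\leq n-2$ for any spanning tree $T$, which makes the inequality strict for all non-trees and cleanly kills the equality case. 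That spanning-tree monotonicity step is sound (a connected dominating set of a spanning subgraph remains connected and dominating in the supergraph), and your justification of $\gamma_c(T)=n-n_L$ via the fact that every internal vertex of a tree is a cut-vertex, hence lies in every connected dominating set (the paper's Observation \ref{ob1}), is the standard one.
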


Similar result we can prove for weakly convex domination number.

 \begin{thm} 
For any connected graph $G$ with $n\geq 3$ vertices and $m$ edges is $\gwc(G)\leq 2m-n$ with equality if and only if $G$ is a path or a cycle $C_p$ with $p\geq 7.$ 
\end{thm}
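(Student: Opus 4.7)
The approach is to split on whether $G$ is a tree or contains a cycle, using $V(G)$ itself as a weakly convex dominating set in the latter case and $V(G)\setminus V_L$ in the former. Note that $V(G)$ is trivially weakly convex dominating, giving $\gwc(G)\le n$, so whenever $m\ge n$ we already have $\gwc(G)\le n\le 2m-n$. When $G$ is a tree, $V(G)\setminus V_L$ is dominating because each leaf is adjacent to its support, and weakly convex because a degree-one vertex cannot be an internal vertex of any geodesic, so every geodesic between two non-leaves avoids $V_L$; this gives $\gwc(G)\le n-n_L\le n-2 = 2m-n$, using that every tree on $n\ge 3$ vertices has at least two leaves.

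For the equality characterization I would again separate the two cases. In the tree case, $\gwc(G)=n-2$ together with the chain $\gwc(G)\le n-n_L\le n-2$ forces $n_L=2$, so $G=P_n$; conversely the internal vertices of $P_n$ witness $\gwc(P_n)=n-2$. If $G$ has a cycle, equality combined with $\gwc(G)\le n\le 2m-n$ forces both $m=n$ and $\gwc(G)=n$, so $G$ is unicyclic. Were $G$ to have a leaf, the same non-leaf argument would produce a weakly convex dominating set of size $n-n_L<n$, contradicting $\gwc(G)=n$. Hence $G$ must be a pure cycle $C_p$, and it remains to pin down for which $p$ we have $\gwc(C_p)=p$.

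This last step is where the main difficulty lies. For $p\le 6$ one exhibits a proper weakly convex dominating set by inspection: a single vertex in $C_3$, two adjacent vertices in $C_4$, three consecutive vertices in $C_5$, and four consecutive vertices in $C_6$. For $p\ge 7$, suppose $X\subsetneq V(C_p)$ is weakly convex dominating and let $A$ be a maximal arc of consecutive vertices outside $X$. If $|A|\ge 3$, the interior of $A$ contains a vertex with both neighbours outside $X$, hence undominated. If $|A|\in\{1,2\}$, the two $X$-vertices flanking $A$ have distance $2$ or $3$ realised by the path through $A$, whereas the alternative arc around $C_p$ has length $p-2\ge 5$ or $p-3\ge 4$, so the unique geodesic runs through $A$ and contains vertices outside $X$, breaking weak convexity. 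Hence $X=V(C_p)$ and $\gwc(C_p)=p$. The delicate calibration is precisely the $|A|=2$ subcase, where the inequality $p-3>3$ is what pins the threshold at $p\ge 7$.
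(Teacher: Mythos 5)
Your proof is correct and follows essentially the same route as the paper's: bound $\gwc(G)$ by $n-n_L$ when $G$ is a tree and by $n$ otherwise, then observe that equality forces either $n_L=2$ (a path) or $m=n$ together with $\gwc(G)=n$ (a cycle of length at least $7$). The only difference is that you verify the final claim that $\gwc(C_p)=p$ exactly when $p\ge 7$, and that a unicyclic graph with a leaf satisfies $\gwc(G)<n$, directly via the maximal-arc argument, whereas the paper simply asserts this step (leaning on its girth theorem); your version is more self-contained but not a genuinely different approach.
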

\begin{proof}
Assume first that $G$ is a tree. Then $\gwc(G)=n-n_L$ $=2(n-1)-n+2-n_L$ $=2m-n+2-n_L \leq 2m-n$ with equality $\gwc(G)=2m-n$ when $n_L=2,$ what means that $G$ is a path.

Assume now $G$ is not a tree; thus $m\geq n.$ Let first $\gwc(G)\leq n-1.$ Then we have $\gwc(G)\leq n-1 = 2n-n-1<2m-n.$ 
Let now $\gwc(G)=n.$ Then we have $\gwc(G)=n=2n-n\leq 2m-n$ with equality for $m=n.$ The only case where $\gwc(G)=n$ and $m=n$ happens if $G=C_p, p~\geq~7.$
\end{proof}

%
%


%
%

%

We continue with showing that we can get   
 $\gc(G)=\gwc(G)$   
 if we consider graphs for which  the subgraph induced by a minimum connected dominating set $D$  has a small diameter. A connected perfect dominating set  $D$ is a connected  dominating set  where each vertex in $G$  is dominated by exactly one vertex  of $D.$

\begin{lem}\label{diameter} If $G$ is a connected  graph with a  minimum connected dominating set $D$  such that either 
${\rm diam}(G[D])\leq 2$ or  $D$ is a perfect   connected dominating set with ${\rm diam}(G[D])=3,$ then $\gc (G)=\gwc (G)$.
\end{lem}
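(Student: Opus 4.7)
The plan is to prove equality by establishing $\gwc(G) \leq \gc(G)$; the reverse inequality holds in every graph because every weakly convex dominating set is in particular a connected dominating set. Concretely, I would show that the given minimum connected dominating set $D$ is itself weakly convex, so that $\gwc(G) \leq |D| = \gc(G)$.

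To verify weak convexity of $D$, I would fix arbitrary $a, b \in D$ and set $k := d_{G[D]}(a,b)$; the task is to produce an $(a,b)$-geodesic of $G$ whose vertices all lie in $D$. The cases $k \leq 2$ are uniform and cover the first hypothesis. For $k = 1$ the edge $ab$ itself is a geodesic of $G$ contained in $D$. For $k = 2$, pick any length-$2$ path $a$--$c$--$b$ inside $G[D]$; because $G[D]$ is an induced subgraph, $ab \notin E(G)$ (otherwise $k=1$), hence $d_G(a,b) = 2$ and this path is an $(a,b)$-geodesic of $G$ lying in $D$.

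The substantive case is $k = 3$, which can only arise under the second hypothesis. I would pick a length-$3$ path $P$ in $G[D]$ joining $a$ and $b$; it suffices to check that $d_G(a,b) = 3$, so that $P$ is automatically an $(a,b)$-geodesic of $G$. The value $d_G(a,b) = 1$ is ruled out because it would give $k = 1$. If $d_G(a,b) = 2$ via some vertex $c$, then $c \in D$ would produce a length-$2$ path in $G[D]$, contradicting $k = 3$, whereas $c \notin D$ would mean that $c$ has two distinct neighbours $a, b$ in $D$, contradicting the perfectness of $D$. Thus $d_G(a,b) = 3$ and $P$ witnesses weak convexity.

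The only real obstacle is this final step: it is precisely the perfect-domination hypothesis that forbids a length-$2$ shortcut through a vertex outside $D$, and one can readily construct diameter-$3$ examples showing that the conclusion fails without such an assumption. Everything else reduces to a short case analysis on $d_{G[D]}(a,b)$ combined with the fact that $G[D]$ is an induced subgraph of $G$.
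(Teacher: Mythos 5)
Your proof is correct and takes essentially the same approach as the paper's: both reduce the claim to showing that $D$ itself is weakly convex, using the fact that $G[D]$ is induced to handle distances $1$ and $2$, and invoking the perfect-domination hypothesis to rule out a length-$2$ shortcut through a vertex outside $D$ when $d_{G[D]}(a,b)=3$. The only difference is presentational: the paper argues by contradiction from a pair with $d_{G[D]}(x,y)>d_G(x,y)$, whereas you run a direct case analysis on $d_{G[D]}(a,b)$.
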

\begin{proof}   
Suppose  there exist $x,y\in D$ such that $d_{G[D]}(x,y)>d_G(x,y)$. Of course $d_G(x,y)>1;$ otherwise $1=d_G(x,y)=d_{G[D]}(x,y).$
If  ${\rm diam}(G[D])\leq 2$,  also $d_{G}(x,y)\leq 2$  for  all pair of different vertices $x,y\in D,$  a contradiction. 
Now, assume that $D$ is perfect dominating set with ${\rm diam}(G[D])=3.$  Then  $d_{G[D]}(x,y)\leq 3$  for  all pair of different vertices $x,y\in D.$
If $d_G(x,y)=2<d_{G[D]}(x,y)=3,$ then there is a vertex $z\notin D$ such that $z\in N(x)\cap N(y),$  a contradiction with the fact that $D$ is perfect dominating set. 
\end{proof}





Note that there are no graphs $G$ for which $\gamma_c(G)=n$ or $\gamma_c(G)=n-1.$
In fact, in \cite{sampath} the following result was proved.

\begin{thm} \cite{sampath}
For any connected graph $G$ with at least three vertices is $\gamma_c(G)\leq n-2$ with equality  $\gamma_c(G)= n-2$
if and only if $G = P_n$ or $G = C_n$.
\end{thm}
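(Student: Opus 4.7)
The plan is to bound $\gc(G)$ via spanning trees and then squeeze out the extremal graphs from the degree structure of those trees. For the upper bound, let $T$ be any spanning tree of $G$. Since $n \geq 3$, the tree $T$ has at least two leaves, so the set $I_T$ of internal (non-leaf) vertices satisfies $|I_T| \leq n-2$. Moreover $I_T$ induces a subtree of $T$ (a path between two internal vertices cannot pass through a leaf), and every leaf of $T$ is adjacent in $T \subseteq G$ to its unique neighbour in $I_T$. Hence $I_T$ is a connected dominating set of $G$, giving $\gc(G) \leq n-2$.

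For the ``if'' direction of the equality, I would verify $\gc(P_n) = \gc(C_n) = n-2$ directly. In $P_n = v_1 v_2 \cdots v_n$, any dominating set must contain both $v_2$ and $v_{n-1}$, and connectedness forces the whole subpath between them, so $\{v_2, \ldots, v_{n-1}\}$ is the unique minimum. In $C_n$ any connected induced subgraph is an arc, and an arc on $k$ vertices dominates exactly $k+2$ vertices, forcing $k \geq n-2$.

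For the ``only if'' direction, assume $\gc(G) = n-2$. The upper bound argument forces every spanning tree $T$ of $G$ to have at most two leaves, hence to be a Hamiltonian path. Suppose for contradiction that $G$ contains a vertex $v$ with $d_G(v) \geq 3$. Starting from the star consisting of all $d_G(v)$ edges incident to $v$, extend it greedily to a spanning tree of $G$ by repeatedly adding a boundary edge from the current subtree to some not-yet-reached vertex, which is always possible by connectedness. In the resulting spanning tree $v$ still has degree $\geq 3$, and the handshake identity $\sum_u (d_T(u)-2) = -2$ then forces at least three leaves, contradicting that every spanning tree is a path. Hence $\Delta(G) \leq 2$, and a connected graph on $n \geq 3$ vertices with maximum degree at most $2$ is either $P_n$ or $C_n$.

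The step that requires the most care is the spanning-tree extension that preserves the full degree of $v$: one must check that the star at $v$ can always be completed to a spanning tree without any of its edges ever being rejected. The iterative boundary-edge construction does the job, but I would state cleanly why the added edges never coincide with those of the star (they go strictly from visited to unvisited vertices while the star is already fully inside the current subtree), and why the process reaches all of $V(G)$.
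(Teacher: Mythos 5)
The paper does not prove this statement at all: it is quoted verbatim from the cited reference \cite{sampath}, so there is no in-paper argument to compare against. Your proof is correct and self-contained, and it follows the standard route (the internal vertices of any spanning tree form a connected dominating set of size at most $n-2$; equality forces every spanning tree to be a Hamiltonian path, hence $\Delta(G)\leq 2$ via a spanning tree grown from a star at a high-degree vertex). The only nit is in the $P_n$ computation: a plain dominating set need not contain $v_2$ (it could contain $v_1$ instead), so you should say that any \emph{connected} dominating set of $P_n$ with $n\geq 3$ must contain $v_2$ and $v_{n-1}$ --- which is immediate, since a connected set containing $v_1$ but not $v_2$ is the singleton $\{v_1\}$, which does not dominate. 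With that wording fixed, the argument is complete.
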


However, there are some graphs $G$ for which $\gamma_{wcon}(G)=n.$ Some of these graphs were characterized in \cite{lemanska} in the following theorem.

\begin{thm}\label{twr2} \cite{lemanska} If $G$ is a connected graph with no end-vertex such that $g(G)\geq 7,$ then $\gamma_{wcon}(G)=n.$
\end{thm}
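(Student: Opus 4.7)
The plan is to argue by contradiction: suppose $D$ is a weakly convex dominating set with $D\neq V(G)$, and use the girth hypothesis $g(G)\geq 7$ to produce a cycle of length at most $6$. Since $G$ has no end-vertex, every $v\in V(G)\setminus D$ has at least two neighbours, and since $D$ dominates $v$, at least one of them lies in $D$. I would fix such a $v$ and split on the size of $N_G(v)\cap D$.

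\emph{Case A:} $v$ has two distinct neighbours $u_1,u_2$ in $D$. Weak convexity supplies a $(u_1,u_2)$-geodesic whose vertices all lie in $D$; since $u_1,u_2\in N_G(v)$ we have $d_G(u_1,u_2)\leq 2$. Length $1$ gives the triangle $v,u_1,u_2$; length $2$ gives a common neighbour $x\in D$ of $u_1,u_2$ with $x\neq v$, hence the $4$-cycle $v,u_1,x,u_2$. Both violate $g(G)\geq 7$. \emph{Case B:} $u_1$ is the only neighbour of $v$ in $D$; pick any other neighbour $u_2\in N_G(v)\setminus D$ and, using that $D$ dominates $u_2$, some $w\in D\cap N_G(u_2)$. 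Short-cycle arguments force $w\neq u_1$ (else triangle $v,u_1,u_2$), $w\notin N_G(v)$ (else Case A applies), and $u_1w\notin E(G)$ (else $4$-cycle $v,u_1,w,u_2$). Combined with the path $u_1,v,u_2,w$ this pins $d_G(u_1,w)\in\{2,3\}$, and weak convexity demands a $(u_1,w)$-geodesic in $D$. If $d_G(u_1,w)=2$, a common neighbour $x\in D$ of $u_1,w$ must differ from $v$ and $u_2$, producing the $5$-cycle $u_1,x,w,u_2,v$. If $d_G(u_1,w)=3$, any alternative length-$3$ path $u_1,a,b,w$ with $a,b\in D$ is either internally disjoint from $u_1,v,u_2,w$ (producing a $6$-cycle) or shares an internal vertex; the shared-vertex cases force $a=v$ or $b=u_2$ (impossible since $v,u_2\notin D$), $a=u_2$ (producing the triangle $v,u_1,u_2$), or $b=v$ (producing the forbidden edge $vw$). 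Every branch contradicts $g(G)\geq 7$, so $D=V(G)$ and hence $\gwc(G)=n$.

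The main obstacle I anticipate is the cycle-avoidance bookkeeping in the sub-case $d_G(u_1,w)=3$ of Case B: one must enumerate every way a hypothetical alternative $(u_1,w)$-geodesic in $D$ can interact with the fixed path $u_1,v,u_2,w$, and each configuration has to be refuted either by exhibiting a cycle of length at most $6$ or by exploiting the membership constraints $v\notin D$, $u_2\notin D$, and $u_1$ being the unique neighbour of $v$ in $D$.
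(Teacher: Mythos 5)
Your proof is correct. Note first that the paper does not prove this statement at all: Theorem \ref{twr2} is quoted from \cite{lemanska} as a known result, so there is no in-paper argument to compare yours against. Your case analysis is sound and self-contained: choosing $v\notin D$, using $\delta(G)\geq 2$ to get a second neighbour, and then letting weak convexity force a $D$-geodesic between two vertices of $D$ at distance at most $3$ produces a cycle of length $3$, $4$, $5$ or $6$ in every branch, contradicting $g(G)\geq 7$. The bookkeeping you worried about in the sub-case $d_G(u_1,w)=3$ is actually lighter than you anticipated: since the internal vertices $a,b$ of the $D$-geodesic lie in $D$ while $v,u_2\notin D$, the two $(u_1,w)$-paths are automatically internally disjoint and the $6$-cycle appears immediately; your extra enumeration of coincidences is harmless but redundant. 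The only cosmetic slip is the phrase ``else Case A applies'' when ruling out $w\in N_G(v)$ in Case B --- the correct justification is that $w\in D\cap N_G(v)$ would force $w=u_1$ by the uniqueness assumption of Case B, which you have already excluded --- but the conclusion $w\notin N_G(v)$ stands either way.
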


 By definition we have that in general, $\gwc(G)\geq \gc(G)$ for any graph $G.$  As  consequence of  Theorems   above
we get necessary condition for $\gamma_{wcon}(G)= \gamma_{c}(G).$  
\begin{wn} \label{necessary}   If $\gamma_{wcon}(G)= \gamma_{c}(G),$ 
 then either $G$ has end-vertices  or $g(G)<~7.$
\end{wn}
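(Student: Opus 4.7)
The plan is to argue by contrapositive: I would show that if both $G$ has no end-vertices and $g(G)\geq 7$, then $\gwc(G)>\gc(G)$, which immediately gives the corollary.

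First, I would note that the hypothesis $g(G)\geq 7$ forces $G$ to contain a cycle of length at least $7$, so in particular $n\geq 7\geq 3$. This is needed to make the theorem bounding $\gc(G)$ applicable. With $G$ having at least three vertices, the theorem stated just before (from \cite{sampath}) yields $\gc(G)\leq n-2$.

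Next, under the assumption that $G$ has no end-vertices and $g(G)\geq 7$, Theorem~\ref{twr2} applies directly to give $\gwc(G)=n$. Combining this with the bound from the previous step,
\[
\gwc(G)=n>n-2\geq \gc(G),
\]
so the equality $\gwc(G)=\gc(G)$ cannot hold. Contrapositively, $\gwc(G)=\gc(G)$ forces either the existence of an end-vertex in $G$ or $g(G)<7$, which is exactly the conclusion.

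There is essentially no obstacle here: the corollary is a direct logical consequence of Theorem~\ref{twr2} and the upper bound $\gc(G)\leq n-2$ stated above, together with the trivial observation that $g(G)\geq 7$ ensures $n\geq 3$ so that the upper bound is applicable. The only thing to double-check is that no degenerate case (such as a very small graph) escapes both hypotheses, but the girth condition takes care of this automatically.
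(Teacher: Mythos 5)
Your argument is correct and is exactly the derivation the paper intends: the corollary is stated there as an immediate consequence of Theorem~\ref{twr2} (which gives $\gwc(G)=n$ when $G$ has no end-vertex and $g(G)\geq 7$) together with the Sampathkumar--Walikar bound $\gc(G)\leq n-2$, argued by contrapositive just as you do. No discrepancy to report.
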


$ $

We have a more general result for   connected graph with  $g(G)\geq 7.$  
We begin with  the following  observation.

\begin{obs}\label{ob1}
Let $G\neq K_n$ be a graph of order $n\geq 3$. If $D$ is a minimum connected or weakly convex
dominating set of $G,$ then every cut-vertex belongs to $D$ and  no simplicial vertex belongs to $D.$ 
\end{obs}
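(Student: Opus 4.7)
The plan is to handle the two assertions separately, after the common observation that a minimum set $D$ induces a connected subgraph: this is immediate for a connected dominating set, and for a weakly convex dominating set it follows because any $(a-b)$-geodesic whose vertices all lie in $D$ is a path in $G[D]$. Hence, in both cases, $G[D]$ is connected. The main obstacle is to treat the connected and weakly convex cases in a uniform way, which this observation together with a short-circuiting trick through the clique $N_G[v]$ will make possible.

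For the cut-vertex claim, I will argue by contradiction. Suppose a cut-vertex $v$ of $G$ does not belong to $D$. Then $G-v$ has at least two components $C_1, C_2, \ldots$, and since $G[D]$ is connected with $v \notin D$, the set $D$ lies entirely in one of them, say $C_1$. Pick any vertex $w$ of another component $C_i$; its neighbours are contained in $C_i \cup \{v\}$, but $v \notin D$ and $C_i \cap D = \emptyset$, so $w$ has no neighbour in $D$ and is not itself in $D$, contradicting the fact that $D$ dominates $G$.

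For the simplicial part, let $v \in D$ be simplicial and set $D' = D - \{v\}$; the plan is to show that $D'$ is still dominating, that $G[D']$ is connected, and that $D'$ is weakly convex whenever $D$ was, contradicting the minimality of $|D|$. First, $|D| \geq 2$, for otherwise $\{v\}$ alone dominates $G$, forcing $V(G) = N_G[v]$ to be a clique and hence $G = K_n$, against the hypothesis. Thus $v$ has at least one neighbour $u \in D$. Since $N_G[v]$ is a clique, every vertex dominated by $v$ belongs to $N_G[u]$, so it is still dominated by $u \in D'$, while vertices outside $N_G[v]$ were already dominated by $D \setminus \{v\}$. Connectedness of $G[D']$ holds because the neighbours of $v$ in $G[D]$ pairwise belong to the clique $N_G[v]$, so no pair of them needs $v$ as a bridge. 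Finally, given $a, b \in D'$, take an $(a-b)$-geodesic $P$ with vertices inside $D$: if $v$ were an interior vertex of $P$ with predecessor $x$ and successor $y$ on $P$, then $x, y \in N_G(v)$ would be adjacent by the clique property, letting us replace the subpath $x, v, y$ by the edge $xy$ and produce a shorter $(a-b)$-path, contradicting that $P$ is a geodesic. Hence $v \notin P$ and $P$ lies in $D'$, so $D'$ is weakly convex.
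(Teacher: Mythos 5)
The paper states this as an \emph{Observation} and supplies no proof, so there is nothing to compare against; your argument is correct and complete. The points you make explicit --- that $G[D]$ is connected in the weakly convex case too (since geodesics inside $D$ are paths of $G[D]$), that a cut-vertex outside $D$ would leave a whole component undominated, and that the clique $N_G[v]$ lets you short-circuit both connectivity and any geodesic through a simplicial $v$, so that $D-\{v\}$ stays connected, dominating and weakly convex --- are exactly the details the authors leave implicit, and you correctly invoke $G\neq K_n$ to rule out $|D|=1$.
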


\begin{wn}\label{cut_simplicial} 
If $G$ contains only cut-vertices and simplicial vertices,   then $V_C$ is a minimum connected (and also weakly convex) dominating set of $G$ and $\gamma_c(G)=\gamma_{\rm wcon}(G)$. 
\end{wn}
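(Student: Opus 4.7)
The plan is to show that $V_C$ is itself both a connected dominating set and a weakly convex dominating set, then invoke Observation~\ref{ob1} to upgrade this to minimality for both parameters (implicitly assuming the ambient hypothesis $G\neq K_n$ of that observation). A preliminary remark, used throughout, is that in a connected graph of order at least two no simplicial vertex can be a cut-vertex: if $v$ is simplicial then $N_G(v)$ is a clique, so any path through $v$ may be rerouted via the edge between its entry and exit neighbours, and $G-v$ remains connected. Consequently our hypothesis partitions $V(G)$ as $V_C$ together with the set of simplicial vertices.

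First I would check that $V_C$ dominates. If $v$ is simplicial, I claim $v$ has a cut-vertex neighbour; otherwise every neighbour of $v$ would itself be simplicial, and comparing closed neighbourhoods of two adjacent simplicial vertices one sees they must coincide, so iterating, the component of $v$ becomes a clique and $G=K_n$, which is excluded. The main step is then a single shortcut lemma. For any $x,y\in V_C$ and any $(x,y)$-path $P=z_0z_1\cdots z_k$ in $G$, whenever an internal $z_i$ is simplicial its neighbours $z_{i-1},z_{i+1}$ lie in the clique $N_G[z_i]$ and hence are adjacent, so replacing the subpath $z_{i-1}z_iz_{i+1}$ by the edge $z_{i-1}z_{i+1}$ strictly shortens $P$. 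Iterating eliminates every simplicial internal vertex and yields a path from $x$ to $y$ in $G[V_C]$, which proves connectedness of $G[V_C]$.

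The same shortcut, applied to an $(x,y)$-geodesic in $G$, shows that no simplicial vertex can appear as an internal vertex of such a geodesic (otherwise the shortcut would produce a strictly shorter $x$-$y$ walk, contradicting minimality of the geodesic length). Hence some $(x,y)$-geodesic lies entirely inside $V_C$ for every pair $x,y\in V_C$, so $V_C$ is weakly convex. Finally, Observation~\ref{ob1} tells us that any minimum connected (or minimum weakly convex) dominating set $D$ satisfies $V_C\subseteq D$ and contains no simplicial vertex, so by the partition above $D\subseteq V_C$, forcing $D=V_C$; hence $V_C$ is minimum for both parameters and $\gc(G)=\gwc(G)=|V_C|$. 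The only real obstacle is the shortcut step, which is driven entirely by the clique condition on $N_G[v]$; everything else is bookkeeping with the partition $V(G)=V_C\cup\{\text{simplicial vertices}\}$.
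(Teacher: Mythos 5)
Your proof is correct and follows essentially the same route as the paper, which presents this statement as an immediate corollary of Observation~\ref{ob1}: since under the hypothesis (with $G\neq K_n$) every vertex is either a cut-vertex or a simplicial vertex and these classes are disjoint, any minimum connected or weakly convex dominating set must equal $V_C$. Your additional direct verification that $V_C$ is dominating, connected and weakly convex via shortcutting through simplicial vertices is sound but strictly redundant, since the existence of a minimum connected (resp.\ weakly convex) dominating set together with Observation~\ref{ob1} already forces that set to be $V_C$, which simultaneously shows $V_C$ has all the required properties.
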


Now we prove the following result.

\begin{thm}\label{girth}Let $G$ be  a connected graph with $g(G)\geq 7.$ Then,

\begin{enumerate}
 \item $\gwc (G)=n-n_L$; 
 \item $\gwc (G)= \gamma_{c}(G)$ 
if and only if for every $u\in V,$ $u$ is either an end-vertex or a cut-vertex.
 \end{enumerate}
\end{thm}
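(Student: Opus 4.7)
The plan is to establish (1) via matching bounds and then leverage (1) in both directions of (2). For the upper bound $\gwc(G)\leq n-n_L$, the natural candidate set is $V\setminus V_L$: it dominates because the support of any end-vertex has degree at least two (so lies in $V\setminus V_L$, provided $n\geq 3$), and it is weakly convex because an end-vertex, having degree one, can never be an internal vertex of any path, so every geodesic between two non-end-vertices stays inside $V\setminus V_L$.

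The hard part is the reverse inequality $\gwc(G)\geq n-n_L$, and this is where the girth hypothesis does all the work. Let $D$ be a minimum weakly convex dominating set; Observation \ref{ob1} already gives $D\cap V_L=\emptyset$, so it remains to show $V\setminus V_L\subseteq D$. Suppose for contradiction that some non-end-vertex $v\notin D$. Pick a neighbor $u_1\in D$ of $v$ (obtained from domination) and a second neighbor $u_2$ of $v$. If $u_2\in D$, then $u_1vu_2$ is the unique $(u_1,u_2)$-geodesic: an edge $u_1u_2$ would give a $3$-cycle and any alternative common neighbor $v'$ of $u_1$ and $u_2$ would give a $4$-cycle, both forbidden by $g(G)\geq 7$. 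Weak convexity then forces $v\in D$, a contradiction. If $u_2\notin D$, pick $u'\in D$ adjacent to $u_2$; one checks $u'\neq u_1$ (else a triangle through $v$) and $d_G(u_1,u')=3$ (any shorter path would combine with $u_1vu_2u'$ into a cycle of length at most $5$), and the $(u_1,u')$-geodesic $u_1vu_2u'$ is then unique (a parallel length-$3$ geodesic would close a cycle of length at most $6$). Weak convexity again forces $v\in D$, the desired contradiction.

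For (2), the implication $(\Leftarrow)$ is immediate from Corollary \ref{cut_simplicial} once one observes that every end-vertex is simplicial. For $(\Rightarrow)$, assume $\gwc(G)=\gc(G)$; by part (1) this common value equals $n-n_L$. Any minimum connected dominating set $D$ satisfies $D\cap V_L=\emptyset$ by Observation \ref{ob1}, and cardinality then forces $D=V\setminus V_L$. Minimality now says that for each $v\in D$, either $G[D-v]$ is disconnected or $D-v$ fails to dominate. In the first case $v$ is a cut-vertex of $G[D]$, which lifts to a cut-vertex of $G$ since reattaching the end-vertices only rejoins them to their supports and leaves the components of $G[D]-v$ still separated. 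In the second case the undominated vertex lies in $V_L\cup\{v\}$, and a short case split shows $v$ is always the support of some end-vertex (either directly, or when $v$ itself is undominated, forcing all its neighbors into $V_L$), so removing $v$ from $G$ isolates that end-vertex and $v$ is again a cut-vertex. Hence every non-end-vertex is a cut-vertex, as required.
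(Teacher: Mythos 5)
Your proof is correct, and for part (1) it takes a genuinely different route from the paper. The paper proves (1) by a reduction: it asserts that every connected graph with $g(G)\geq 7$ is obtained from a graph $G'$ with no end-vertex and girth at least $7$ by attaching only cut- and end-vertices, invokes Theorem~\ref{twr2} to get $\gwc(G')=|V(G')|$, and then applies Observation~\ref{ob1}. You instead prove the lower bound $\gwc(G)\geq n-n_L$ directly: for a non-leaf $v\notin D$ you use the girth condition to show that the length-$2$ path $u_1vu_2$ (or the length-$3$ path $u_1vu_2u'$) is the \emph{unique} geodesic between its endpoints in $D$, so weak convexity forces $v\in D$. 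This is self-contained --- it does not rest on the paper's unproved decomposition claim nor on the external Theorem~\ref{twr2} --- and in effect it re-derives the mechanism behind that theorem; the paper's version is shorter but less explicit. For part (2) the two arguments coincide in substance: your $(\Leftarrow)$ is exactly the appeal to Observation~\ref{ob1}/Corollary~\ref{cut_simplicial} via the fact that end-vertices are simplicial, and your $(\Rightarrow)$ (delete a vertex of $D=V\setminus V_L$ and analyze whether connectivity or domination breaks) is the contrapositive of the paper's observation that $V-(V_L\cup\{u\})$ would be a smaller connected dominating set if $u$ were neither an end-vertex nor a cut-vertex. One cosmetic remark: your domination check for $V\setminus V_L$ and the ``lifting'' of cut-vertices from $G[D]$ to $G$ implicitly assume $n\geq 3$, which is harmless under the girth hypothesis.
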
 
\begin{proof} (1). Every connected graph $G$ with $g(G)\geq 7$ can be obtained from some graph $G'$, which fulfills conditions from the Theorem~\ref{twr2}, by adding only cut and end-vertices. As $\gwc(G') = |V(G')|$ and every end-vertex is simplicial, we conclude from the Observation~\ref{ob1} that  $\gwc(G) = n - n_L$.

(2). 
Assume $\gwc (G)= \gc (G).$ From (1) we have $\gwc (G)=\gc (G)  =n - n_L.$
 Suppose there is a vertex $u$ such that $u$ is neither an end-vertex nor cut-vertex. Then $d_G(u)\geq 2$ and for any $x,y\in N_G(u)$ there exists $(x-y)$-path not containing $u.$ Then $V-(V_L\cup \{u\})$ is a connected dominating set of $G$   and $\gc (G)\leq  |V-(V_L\cup \{u\})|=n-(n_L+1)<n-n_L,$ a contradiction.
 
Now assume  for every $u\in V,$ $u$ is either an end-vertex or a cut-vertex.  From Observation \ref{ob1}, the result holds. 
\end{proof}





\section{Some graphs $G$ with equality  $\gwc(G)=\gc(G)$ }

 In this section we provide conditions under which we have equality  $\gwc(G)=\gc(G)$ in particular families of graphs $G$.
We begin with cacti.
 \subsection{Cacti} 
 A \emph{cactus} is a connected graph in which any two simple cycles have at most one common vertex. Equivalently, it is a connected graph in which every edge belongs to at most one simple cycle.
Notice that a unicyclic graph is a cactus with only one cycle and a trees is a cactus having no cycle.
 
\begin{thm}\label{thm:cactus} Let $G$ be a cactus. Then $\gc(G) = \gwc (G)$ if and only if:
\begin{enumerate}
\item for every  cycle in $G,$ $C = C_i,\ i \in \{5,6\}$  we have $d(v) \geq 3$ for every $v \in V(C_i)$ or $C_i$ has two adjacent vertices of degree $2$ and
\item for every $C = C_i,\ i \geq 7$ in $G$, $d(v) \geq 3$ for every $v \in V(C)$.
\end{enumerate}
\end{thm}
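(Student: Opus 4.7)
The plan is to prove both directions block by block using the structure of the cactus $G$. I would begin with the structural observation that for any connected dominating set $D$ of $G$ and any cycle $C$ in $G$, the intersection $D\cap V(C)$ is a single arc of $C$: two disjoint arcs of $C$ lying in $D$ could never be reconnected through the rest of $G$, since every branch attached to $C$ meets $C$ in a single cut-vertex. By Observation~\ref{ob1} and the fact that every support must belong to every dominating set, this arc must contain the set $F_C$ of cut-vertices of $G$ and supports of $G$ lying on $C$, and it must dominate $V(C)$. Since blocks of a cactus meet only at cut-vertices, the minimum $|D\cap V(B)|$ on each block $B$ is determined independently, so $\gamma_c(G)$ decomposes block by block.

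For sufficiency, assuming $(1)$ and $(2)$, I would construct a minimum CDS $D$ that is also weakly convex by choosing, on each cycle $C=C_i$, either the whole cycle or a short geodesic arc. When $i\leq 4$ any minimum arc has length at most $\lfloor i/2\rfloor$ automatically; when $i\geq 7$, condition $(2)$ forces $F_C=V(C)$, hence $D\cap V(C)=V(C)$; when $i\in\{5,6\}$ and every vertex of $C$ has degree at least $3$ the same happens; and when $i\in\{5,6\}$ has an adjacent degree-$2$ pair $v_a,v_{a+1}$ on $C$, I would set $D\cap V(C)=V(C)\setminus\{v_a,v_{a+1}\}$, an arc of length $i-3\leq\lfloor i/2\rfloor$ that contains $F_C$ and dominates the two excluded vertices through $v_{a-1},v_{a+2}\in D$. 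In each case the per-cycle choice is minimum on that block, so $|D|=\gamma_c(G)$. Since every arc of length at most $\lfloor i/2\rfloor$ is a geodesic in $G$ between its endpoints, and in the $V(C)$ case any geodesic between two vertices of $V(C)$ already lies in $D$, any geodesic of $G$ between two vertices of $D$ can be routed block by block through $D$, proving $D$ weakly convex.

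For necessity I would argue the contrapositive: if some cycle $C=C_i$ violates the condition, then $C$ contains at least one degree-$2$ vertex and, if $i\in\{5,6\}$, the degree-$2$ vertices of $C$ form an independent set. A short case analysis on these configurations (a single degree-$2$ vertex on $C_i$ with $i\geq 7$; one or more pairwise non-adjacent degree-$2$ vertices on $C_5$ or $C_6$) shows that the smallest arc of $C$ containing $F_C$ and dominating $V(C)$ has length strictly greater than $\lfloor i/2\rfloor$ and is a proper sub-arc of $C$, because the presence of forced vertices on both sides of the omitted degree-$2$ vertices forces the arc to wrap around past them. Consequently every minimum CDS restricts on $C$ to a non-geodesic arc (its endpoints admit a shorter path in $G$ through the opposite arc across an excluded degree-$2$ vertex), so no minimum CDS is weakly convex; and since any weakly convex CDS must intersect $C$ either in $V(C)$ or in a geodesic arc of length $\leq\lfloor i/2\rfloor$ containing $F_C$ (no such short arc exists under the failed condition), one obtains $\gamma_{\rm wcon}(G)>\gamma_c(G)$, contradicting the assumed equality.

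The main obstacle is precisely this case analysis for necessity: enumerating every admissible pattern of degree-$2$ vertices on a bad cycle and verifying, in each configuration, that no arc of length $\leq\lfloor i/2\rfloor$ can simultaneously contain $F_C$ and dominate $V(C)$. A secondary subtlety is justifying that $D\cap V(C)$ is a single arc for \emph{every} connected dominating set (not just for a convenient one), so that a failure localised on a single cycle propagates into a strictly positive global gap $\gamma_{\rm wcon}(G)-\gamma_c(G)$; both points rest on the cactus structure and the per-block independence of minima established at the outset.
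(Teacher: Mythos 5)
Your approach is sound and reaches the same characterization, but it is organized quite differently from the paper's proof. The paper argues directly: for sufficiency it takes a minimum connected dominating set $D$, supposes $d_{G[D]}(x,y)>d_G(x,y)$ for some $x,y\in D$, localizes the failure to a cycle of length $5$ or $6$ with exactly one vertex $z\notin D$, and derives a contradiction with condition (1) from the minimality of $D$; for necessity it shows that a violated condition forces the whole vertex set of a $C_5$, $C_6$ or $C_{\geq 7}$ into any minimum weakly convex dominating set, which then cannot be a minimum connected dominating set. You instead prove a structural lemma (in a cactus, $D\cap V(C)$ is a single arc for \emph{every} connected dominating set $D$, since any path between two distinct vertices of $C$ must trace an arc of $C$, as an excursion off $C$ returning at a different vertex would create a second cycle sharing edges with $C$) and then decompose both invariants block by block; this decomposition is legitimate because all cut-vertices are forced into every minimum connected or weakly convex dominating set (Observation \ref{ob1}) and the non-cut-vertices of a block can only be dominated from inside that block, so the per-block minima really are independent. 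Your quantitative claims check out: an arc of $j$ vertices on $C_i$ dominates exactly $\min(j+2,i)$ cycle vertices, so the block minimum is $i-2$ exactly when two adjacent degree-$2$ vertices can be omitted, and when the relevant condition fails the smallest feasible arc has $i-1$ vertices (for $i\in\{5,6\}$) or at least $i-2$ vertices (for $i\geq 7$), hence length strictly greater than $\lfloor i/2\rfloor$, while a weakly convex set must meet $C$ either in a geodesic arc (too short to contain the forced vertices and dominate) or in all of $V(C)$; this yields the strict gap. What your route buys is an explicit, reusable description of how $\gc$ and $\gwc$ localize on the blocks of a cactus; what it costs is that the two steps you flag as obstacles (the single-arc property for arbitrary connected dominating sets, and the case analysis of degree-$2$ patterns on a bad cycle) must actually be written out, whereas the paper's contradiction argument avoids the decomposition machinery and is considerably shorter. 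Neither of your deferred steps hides a genuine difficulty.
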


\begin{proof}
Assume (1) and (2) hold. Let $D$ be a minimum connected dominating set of $G$. We show that $D$ is also weakly convex. Suppose it is not true. Then there exist vertices $x$ and $y$ such that $\{x,y\}\subset D$ and $d_{G[D]}(x,y)> d_{G}(x,y)$. This implies that $x$ and $y$ belong to a cycle $C$ of the length at least 5. 
Since (2) holds, $C$ can have the length 5 or 6.
 In fact, if $C$ is longer than $6$, then from $(2)$ every vertex on $C$ is a cut-vertex, hence from Observation \ref{ob1}, $V(C)\subseteq V_C \subseteq D$, which contradicts $d_{G[D]}(x,y)> d_{G}(x,y)$.
  Hence, only one vertex from $C$ does not belong to $D$, let us say $z\not \in D$. Notice that $d_G(z)=2$ and $z$ is a common neighbour of $x$ and $y$. The vertices from $V(C)-\{z\}$ form an $(x-y)$-path $P$ such that $V(P)\subseteq D$. Since $D$ is minimum, at most one vertex of $P$ has degree 2 (if not we could find a smaller connected dominating set of $G$). It gives a contradiction with (1) and finally $\gc (G)=\gwc (G)$.

Conversely, let $D$ be a minimum weakly convex dominating set of $G$. Since $\gc (G)=\gwc (G)$, $D$ is also a minimum connected dominating set of $G$. Suppose first that (1) does not hold. Thus there exists a cycle $C_i$, $i\in \{5,6\}$, such that $C_i$ has at least one vertex of degree 2 and the set of the vertices of degree 2 of $C_i$ is independent. Since vertices of degree 3 of $C_i$ (as cut-vertices, by Observation \ref{ob1}) belong to $D$ and since $D$ is weakly convex and $i\in \{5,6\}$, we obtain that $V(C_i)\subseteq D$. 
  We get   a contradiction becase $D-\{v\}$, where $d_G(v)=2$ and $v\in V(C_i)$,  is a connected dominating set of $G$. Next, suppose that (2) does not hold. Thus there exists a vertex $v$ of degree 2 on a cycle $C_i$, $i\geq 7$. Since $D$ is weakly convex, $V(C_i)\subset D$ and then $D-\{v\}$ is a connected dominating set of $G$, a contradiction.
\end{proof}

 
%

\subsection{Distance-hereditary  graphs}   
A {\em distance-hereditary graph} is a connected graph in which every connected induced subgraph is isometric (that is, the distance of any two vertices in any connected induced subgraph equals their distance in the graph). From the definition we get the following
\begin{lem} If $G$ is a distance-hereditary graph, then $\gwc(G)=\gc(G).$ 
\end{lem}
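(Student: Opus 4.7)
The plan is to prove equality by two inequalities. The direction $\gwc(G)\geq \gc(G)$ holds for \emph{every} connected graph: any weakly convex dominating set $X$ is, in particular, connected, because for every pair $a,b\in X$ there is an $(a-b)$-geodesic inside $X$, which is in particular an $(a-b)$-path in $G[X]$. So the whole task reduces to establishing $\gwc(G)\le \gc(G)$ in the distance-hereditary case.

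For the nontrivial inequality, I would start from an arbitrary minimum connected dominating set $D$ of $G$ and prove that $D$ itself is already weakly convex. Because $D$ is connected, the induced subgraph $G[D]$ is a connected induced subgraph of $G$, and the defining property of distance-hereditary graphs yields that $G[D]$ is isometric in $G$; that is,
\[
d_{G[D]}(u,v)=d_G(u,v)\quad\text{for all }u,v\in D.
\]
Given $u,v\in D$, pick any shortest $(u-v)$-path $P$ in $G[D]$. Its length is $d_{G[D]}(u,v)=d_G(u,v)$, so $P$ is an $(u-v)$-geodesic of $G$ whose vertices all lie in $D$. This is exactly the weak-convexity condition, so $D$ is a weakly convex dominating set of $G$ and therefore $\gwc(G)\le |D|=\gc(G)$.

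I do not expect a serious obstacle: the whole argument is essentially a one-line consequence of the definition of a distance-hereditary graph applied to the subgraph induced by a minimum connected dominating set. The only point that requires a moment of care is making sure that the geodesic picked inside $G[D]$ retains its status as a geodesic of the ambient graph $G$, which is precisely what the isometric embedding supplies.
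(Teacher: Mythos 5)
Your proposal is correct and follows essentially the same route as the paper: both take a minimum connected dominating set $D$, observe that $G[D]$ is a connected induced subgraph and hence isometric by the definition of distance-hereditary, and conclude $d_{G[D]}(x,y)=d_G(x,y)$ so that $D$ is already weakly convex. Your write-up is if anything slightly more explicit than the paper's (which routes through an induced path inside $D$), but there is no substantive difference.
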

\begin{proof} Let  $D$ be a connected dominating set of $G.$ If $|D|=1,$ then  $\gwc(G)=\gc(G).$ 
Now let $x,y $ be two different vertices in $D.$ 
Then there is an induced $(x-y)$-path with all its vertices in $D.$ Using the fact that  $G$ is distance-hereditary we conclude that  $d_{G[D]}(x,y)=d_G(x,y).$ 
Then $D$ is  weakly convex and we can get $\gwc(G)=\gc(G).$ 
\end{proof}

Notice that the converse is not true, i.e. there exist graphs $G$ with equality $\gwc(G)=\gc(G)$ which are not distance-hereditary; the example of such a graph can be a corona of a cycle $C_7$, i.e. $G=C_7 \circ K_1$.
%

A block in a graph  $G$ is a maximal
connected subgraph  $H$ of $G$ such that $H$ does not contain any
cut-vertex of $H$. A 
{\it block graph} is a connected graph $G$ such that
every block in $G$ is a complete graph). 
Since a block graph is a distance-hereditary graph, we conclude
the following

\begin{wn}
If $G$ is a connected block graph, then 
$\gc(G) =\gwc(G)$.
\end{wn}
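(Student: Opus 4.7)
The plan is to reduce the corollary to the preceding lemma by checking that every block graph is distance-hereditary, at which point the equality $\gc(G)=\gwc(G)$ is immediate from that lemma. So the only real content to verify is the distance-hereditary property.

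To do this, let $G$ be a connected block graph, let $H$ be an arbitrary connected induced subgraph, and pick $u,v \in V(H)$. I need $d_H(u,v)=d_G(u,v)$; the inequality $d_H(u,v)\geq d_G(u,v)$ is automatic because $H$ is a subgraph, so only $d_H(u,v)\leq d_G(u,v)$ requires work. The key structural fact about block graphs I will use is that the blocks of $G$ form a tree-like structure and, since every block is complete, a geodesic from $u$ to $v$ in $G$ passes through a uniquely determined sequence of cut-vertices $c_1,c_2,\ldots,c_k$ of $G$ (the cut-vertices separating $u$ from $v$), uses exactly one edge of each block on the way, and has length $k+1$ when $u,v$ lie in distinct blocks.

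The main step is to observe that each such cut-vertex $c_i$ of $G$ must belong to $V(H)$: removing $c_i$ from $G$ disconnects $u$ from $v$, so every $(u-v)$-path in $G$—in particular any $(u-v)$-path inside $H$, which exists since $H$ is connected—must use $c_i$. Once $u,c_1,\ldots,c_k,v$ all lie in $V(H)$, the fact that $H$ is an induced subgraph and each pair $\{u,c_1\},\{c_i,c_{i+1}\},\{c_k,v\}$ lies in a common complete block of $G$ guarantees that the corresponding edges are present in $H$. Hence the geodesic of $G$ is realized inside $H$, proving $d_H(u,v)\le d_G(u,v)$.

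With the distance-hereditary property verified, the previous lemma applies verbatim and yields $\gwc(G)=\gc(G)$. I do not expect any genuine obstacle here; the only subtle point is the cut-vertex argument, and the care needed is just to handle the trivial cases (when $u$ and $v$ lie in the same block, the block is complete and $d_G(u,v)=d_H(u,v)=1$, so there is nothing to check).
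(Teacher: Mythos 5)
Your proposal is correct and takes essentially the same route as the paper: the paper derives the corollary in one line from the preceding lemma by noting that every block graph is distance-hereditary. The only difference is that you additionally supply a proof of that distance-hereditary property (via the cut-vertex separation argument), which the paper treats as a known fact; your verification of it is sound.
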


%
%
 
We say that a graph $G$ is $H$--free if $G$ does not contain $H$ as an induced subgraph.
In particular, $P_4$--free graph is called a {\it cograph}. Since a cograph is a distance-hereditary graph, we conclude
the following

\begin{wn}
If $G$ is a connected cograph, then 
$\gc(G) =\gwc(G)$.
\end{wn}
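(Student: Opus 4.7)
The plan is to reduce to the preceding lemma on distance-hereditary graphs, so it suffices to verify that every connected cograph is distance-hereditary. First I would note that the class of $P_4$-free graphs is closed under taking induced subgraphs, so any connected induced subgraph $H$ of a cograph $G$ is itself a (connected) cograph. Second, I would invoke the standard observation that a connected cograph has diameter at most $2$: if two vertices were at distance $\geq 3$, then any shortest path between them would, by minimality, be an induced path of length $\geq 3$, producing an induced $P_4$ and contradicting the $P_4$-free hypothesis.

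Combining these two facts, for any pair $u,v$ in an arbitrary connected induced subgraph $H\subseteq G$, we obtain $d_H(u,v)\leq 2$. On the other hand, since every path in $H$ is also a path in $G$, we always have $d_G(u,v)\leq d_H(u,v)$. Because $G$ itself has diameter at most $2$, the value $d_G(u,v)$ is either $1$ or $2$; in the first case $uv$ is an edge of $H$ and equality is immediate, and in the second case the two inequalities squeeze $d_H(u,v)=2=d_G(u,v)$. Hence distances are preserved in every connected induced subgraph, i.e.\ $G$ is distance-hereditary.

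Once that is established, the previous lemma immediately gives $\gc(G)=\gwc(G)$, concluding the proof. The main (and only mildly delicate) obstacle is the justification that connected cographs have diameter at most $2$; everything else is either a direct application of the earlier lemma or a routine observation about induced subgraphs preserving the $P_4$-free property.
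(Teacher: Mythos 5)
Your proposal is correct and follows the same route as the paper: the paper likewise derives this corollary by noting that every cograph is distance-hereditary and invoking the preceding lemma, simply omitting the (standard) verification of that fact. Your added argument --- that connected induced subgraphs of a cograph are cographs of diameter at most $2$, since a shortest path of length at least $3$ is induced and would contain an induced $P_4$ --- is a valid way to fill in that omitted step.
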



\subsection{Chordal graphs} Now we foccus our attention on chordal graphs. Recall that  a graph is chordal  if every cycle of length at least 4 has a chord.


\begin{figure}[!htb]
\begin{center} 
	\begin{tikzpicture}[scale=0.7]

 \filldraw[fill=black](3,2) circle(3pt);

 \fill(3,1) circle(3pt);
 \draw(3,2)--(3,1);
 \filldraw[fill=black](1.5,3) circle(3pt);

\fill(1.5,2) circle(3pt);
\draw(1.5,2)--(1.5,3);

 \filldraw[fill=black](-0.5,2) circle(3pt);
 
\fill(0,1) circle(3pt);
\draw(-0.5,2)--(0 ,1);

 \filldraw[fill=black](0,-0.5) circle(3pt);

\fill(0.8,-0.2)  circle(3pt);
\draw(0.1,-0.5)--(0.8,-0.2);

\draw (0,1)--(1.5,2)--(3,1)--(2.2,-0.2)--(0.8,-0.2)--cycle;

\draw(0,1)--(2.2,-0.2);

\draw(1.5,2)--(2.2,-0.2);
\filldraw[fill=black](0.8,-0.2) circle(3pt);
 \filldraw[fill=black](2.2,-0.2) circle(3pt);
\end{tikzpicture}
\end{center}
\caption{Graph $H^*$.}
\label{rr}
\end{figure}

\begin{thm} \label{chordal} Let $G =(V ,E)$ be a connected chordal graph of order $n$. If  $G$ is  $H^*-$free (where $H^*$ is a graph from Figure~\ref{rr}), then  $\gc (G)=\gwc (G).$  
\end{thm}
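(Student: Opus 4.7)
The plan is to show that every minimum connected dominating set $D$ of $G$ is automatically weakly convex; together with the trivial inequality $\gwc(G)\geq\gc(G)$, this yields $\gwc(G)=\gc(G)$. Suppose, for contradiction, that $D$ is a minimum connected dominating set of $G$ that is not weakly convex, and aim to exhibit $H^{*}$ as an induced subgraph of $G$.

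Among all pairs $(x,y)\in D\times D$ with $d_{G[D]}(x,y)>d_G(x,y)$, I would pick one minimising first $d_{G[D]}(x,y)$ and then $d_G(x,y)$, and let $Q=u_0u_1\cdots u_k$ be a shortest $(x-y)$-path in $G[D]$ (so $u_0=x$, $u_k=y$). The minimality of the pair forces every proper subpath of $Q$ to be a $G$-geodesic; in particular, no edge $u_iu_j$ of $G$ satisfies $|i-j|\geq 2$, for such a chord would shorten $Q$ in $G[D]$ and produce a strictly smaller failing pair. A short triangle-inequality argument, applied to the first interior vertex of an $(x-y)$-geodesic in $G$, then reduces to $d_G(x,y)=2$, supplying a vertex $z\in V\setminus D$ adjacent to both $u_0$ and $u_k$, so that the cycle $C=u_0u_1\cdots u_kzu_0$ has length $k+2\geq 5$.

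Next I would invoke chordality: since no chord of $C$ can join two $u_i$'s, every chord of $C$ is of the form $zu_j$, and iteratively triangulating $C$ (each induced $4$-cycle of $G$ must itself be chorded, and again only by a chord of the form $zu_{j'}$) forces $k=3$ together with $z\sim u_i$ for all $0\leq i\leq 3$. Hence $\{z,u_0,u_1,u_2,u_3\}$ induces the chorded $5$-cycle core of $H^{*}$. For the four pendants, I use the minimality of $D$: for each $i\in\{0,1,2,3\}$ the set $D\setminus\{u_i\}$ is not a connected dominating set. Since $z$ is adjacent to all $u_j$, the substitution $(D\setminus\{u_i\})\cup\{z\}$ would reconnect what $u_i$ held together, so the only obstruction to minimality is the existence of a private neighbour $p_i\notin D\cup\{z\}$ with $p_i\sim u_i$ and $p_i\not\sim u_j$ for $j\neq i$; moreover, if two such $p_i,p_j$ were simultaneously adjacent to $z$ one could remove both $u_i$ and $u_j$ while adjoining $z$, producing a strictly smaller connected dominating set -- so one may choose the $p_i$'s with $p_i\not\sim z$. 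Finally, any edge $p_ip_j$ with $i\neq j$ would create a short cycle that, together with the $u_i$-path, violates either chordality or the $G$-geodesic property of $Q$; hence the $p_i$'s are pairwise non-adjacent. The induced subgraph on $\{z,u_0,u_1,u_2,u_3,p_0,p_1,p_2,p_3\}$ is then precisely $H^{*}$, contradicting the $H^{*}$-freeness of $G$.

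The main obstacle I expect is the reduction to $d_G(x,y)=2$: even in a chordal graph it cannot simply be asserted, and one must carefully distinguish whether the first interior vertex of the $G$-geodesic from $x$ to $y$ lies in $D$ or is only dominated by a vertex of $D$, extracting a strictly shorter witness in each subcase. A secondary, quite delicate, point is verifying all the non-adjacencies $p_i\not\sim z$ and $p_i\not\sim p_j$; these rely on a subtle interplay between the minimality of $D$ (forbidding various simultaneous substitutions of $u_j$'s by $z$) and chordality (controlling the short cycles that could otherwise appear through the private neighbours). Together these two steps carry the real combinatorial content of the theorem.
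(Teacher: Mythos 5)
Your overall strategy --- take a minimum connected dominating set $D$, a failing pair $x,y\in D$ with $d_{G[D]}(x,y)>d_G(x,y)$, and force an induced $H^*$ --- is the paper's strategy, and your treatment of the case $d_G(x,y)=2$ essentially reproduces the paper's: chordality makes the common neighbour $z$ adjacent to all of $u_0,\dots,u_k$, the geodesic property of proper subpaths of $Q$ forces $k=3$, and pendants are attached to the four $u_i$'s. The genuine gap is the step you yourself flag: the reduction to $d_G(x,y)=2$ cannot be carried out, under any choice of ``minimal'' failing pair. The problematic situation is when every $(x-y)$-geodesic in $G$ has all of its interior vertices outside $D$; then there is no shorter failing pair to extract, neither from $D$ nor from the $D$-vertices dominating those interior vertices. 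A concrete witness: take a path $z_0z_1z_2z_3z_4$, attach a pendant $y_i$ to each $z_i$, and add two adjacent vertices $w_1\sim z_0,z_1,z_2$ and $w_2\sim z_2,z_3,z_4$. This graph is chordal, its unique minimum connected dominating set is $\{z_0,\dots,z_4\}$, and its unique failing pair is $(z_0,z_4)$ with $d_G(z_0,z_4)=3$ and $d_{G[D]}(z_0,z_4)=4$. Moreover the copy of $H^*$ that must be produced here (the theorem is consistent: this graph does contain one) is embedded quite differently from the configuration you build: its hub is $w_1\notin D$, adjacent to only three consecutive $D$-vertices plus the outside vertex $w_2$, and one of its four degree-one vertices is $z_4$, a vertex of $D$ rather than a private neighbour. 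Your argument never finds this copy. The paper handles $d_G(x,y)\geq 3$ by a separate analysis of the cycle formed by the two geodesics, showing that chordality forces either one outside vertex adjacent to four consecutive $D$-vertices (which does reduce to your configuration) or the two-vertex configuration just described (its graph $H'$), from which $H^*$ is extracted by a different deletion. Without this case your proof is incomplete.

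A secondary weakness is the derivation of the private neighbours $p_i$. You argue that $D\setminus\{u_i\}$ is not a connected dominating set and that $(D\setminus\{u_i\})\cup\{z\}$ restores connectivity; but that set has the same cardinality as $D$, so its being a connected dominating set contradicts nothing about the minimality of $D$ --- the contradiction must come from it being a \emph{weakly convex} dominating set of size $\gc(G)$, which is what the paper invokes and which requires its own verification. Also, ``adding $z$ reconnects what $u_i$ held together'' can fail when $u_i$ has $D$-neighbours outside $\{u_0,u_1,u_2,u_3\}$ whose component of $G[D]-u_i$ misses the cycle; in that situation the needed pendant of $H^*$ must be found inside $D$, not as a private neighbour in $V\setminus D$.
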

 \begin{proof} 
  Suppose that the statement is not true and let $G$ be a minimal counterexample, i.e., $G$ is a connected chordal graph without induced $H^*,$ but  $\gc (G)<\gwc (G)$.
Let $D$ be any minimum connected dominating set of $G$ (obviously $D$ is not weakly convex). 
  
We consider cases depending on ${\rm diam}(G[D])=d.$  If $d\leq 2$, then $\gc(G)=\gwc(G)$, a contradiction.  If $d=3,$ then there exist $x,y\in D$ such that $d_{G[D]}(x,y)>d_G(x,y).$ 
  Observe that  $d_{G[D]}(x,y)=3=d$ otherwise $d_G(x,y)=1$, a contradiction.
  \begin{figure}[!htb]
\begin{center} 
	\begin{tikzpicture}[scale=0.6]

\fill(3,1) circle(3pt);
\fill(1.5,2) circle(3pt);
\fill(0,1) circle(3pt);

\draw (0,1)--(1.5,2)--(3,1)--(2.2,-0.2)--(0.8,-0.2)--cycle;
\draw(0,1)--(2.1,-0.1);
\draw(1.5,2)--(2.1,-0.1);
\filldraw[fill=black](0.8,-0.2) circle(3pt);
\filldraw[fill=white](2.2,-0.2) circle(3pt);

\draw(2.5,-0.3)node{$a$};\draw(3.3,0.7)node{$y$};\draw(0.4,-0.3)node{$x$};
\end{tikzpicture}
\end{center}
\caption{Graph $H'_a$.}
\label{rrr}
\end{figure}
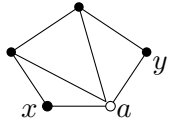

 Thus $d_G(x,y)=2$  and there is $a\notin D$ such that $a\in N_G(x)\cap N_G(y).$
 In this case we find a  cycle $C_5$  in $G$ such that four vertices belong to $D$ and the other, namely $a$, does not belong to $D.$

Using the fact that $G$ is chordal and $d=3$ we get that the  induced subgraph $G[C_5]$  contains a subgraph   isomorphic to  $  H_a'$ shown in the Figure~\ref{rrr} where $a\notin D$ and the rest black nodes belong to $D.$
  Since $D$ is not weakly convex, for every  vertex $v \neq a$ of $C_5$  exists $v'\in N_G(v)$ such that $v'\notin N_G(V(C_5)-\{v\}),$   (otherwise $(D-\{v\})\cup \{a\}$ would be a weakly convex dominating set of $G$ of cardinality $\gc(G)$).

 From above and using that $G$ is chordal and  $d_{G[D]}(x,y)= 3,$     we find an induced subgraph isomorphic to  {$H^*$},  a contradiction.
 
 Now we assume  $d>3$.

There are $x,y \in D$ such that $d_{G[D]}(x,y)=d> d_G(x,y).$ Because $t=d_G(x,y)>1$ there exist a $(x-y)$-geodesic in $G[D]$, say $$P=(x=z_0, z_1,z_2, \cdots ,z_{d-1},z_d=y)$$  and another  $(x-y)$-geodesic in $G$, namely  $P'= (x,w_1, w_2, \cdots  ,w_{t-1},y)$ such  that  $w_i\not \in D$  for $i\in \{1, \cdots, t-1\}$. 
 Then we have   a cycle $ C_{d+t}$, where $d+t\geq 6$.   

  Using again   the chordal condition and the fact that both $P$ and $P'$ are geodesic, we get that the possible chords are edges of the form $z w$ with  $w\in V(P')$  and $z\in V(P).$ Note that every two consecutive vertices $z_i, z_{i+1}$ of $P$ must share at least  one neighbour in $P'.$
 
{\em Case 1. }If there exists a  vertex $a$ of $P'-\{ x,y\}$ such that   
four consecutive vertices    can be found in $N[a]\cap P,$  then $G$ has  a subgraph isomorphic to    $H_a'$ and, with similar arguments as before we conclude that $G$ has an induced subgraph isomorphic to {$H^*$}, a contradiction.

{\em Case 2.} Otherwise, since $G$ is chordal, there are two consecutive vertices $w_i, w_{i+1}$ and a sequence of five vertices in $P$ namely $z_j, \cdots, z_{j+4}$   such that  $  N_G[w_i]\cap P   = \{z_j, z_{j+1}, z_{j+2}\}$ and  $  N_G[w_{i+1}]\cap P   = \{z_{j+2}, z_{j+3}, z_{j+4}\}.$  

Denote by $H'=G[\{   w_i, \, w_{i+1}, \, z_j, z_{j+1}, z_{j+2}, z_{j+3}, z_{j+4}\}]$ the  induced subgraph of $G.$

Note that with an argument similar to the  used above we get that:  If the edge ${w_iz_{j+4}}$ is in $H'$ then also  the edge ${w_iz_{j+3}}$ is in $H'$  and then we have the Case 1.
  The same occurs if the edge ${w_{i+1}z_{j}}$ is in $H'.$

  \begin{figure}[!htb]
\begin{center} 
\begin{tikzpicture}[scale=0.5]

 \fill(3,1.8) circle(3pt);

 \draw(3.6,1.8)node{$z_{j+3}$};
\fill(1.5,2) circle(3pt);

  \draw(1,2.3)node{$z_{j+2}$};

\fill(0,1) circle(3pt);

  \draw(-0.6,1)node{$z_{j+1}$};

\fill(-0.2,-0.2)  circle(3pt);

  \draw(-0.3,-0.5)node{$z_{j}$};

\fill(4.3,0)  circle(3pt);
  \draw(4.9,0.1)node{$z_{j+4}$};

 \filldraw[fill=white](3.5,-1) circle(3pt);
  \draw(3.6,-1.3)node{$w_{i+1}$};

\draw (-0.2,-0.2)--(0,1)--(1.5,2)--(3,1.8)--  (4.3,0)--(3.5,-1) --(1.3,-1.2)--cycle;

 \draw(0,1)--(1.3,-1.2);
\draw(1.5,2)--(3.5,-1);
\draw(1.5,2)--(1.3,-1.2);
\draw(3,1.8)--(3.5,-1);
 
 \filldraw[fill=white](1.3,-1.2) circle(3pt);
\filldraw[fill=white](3.5,-1) circle(3pt);
 \draw(1.8,-1.4)node{$w_i$};

\end{tikzpicture}
\end{center}
\caption{Graph $H'$.}
\label{rrrr}
\end{figure}

Using similar arguments as before we get that $H'$ is an induced subgraph of $G$  and also $H'_{w_{i},w_{i+1}}.$

 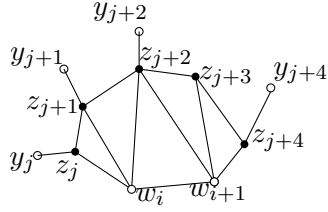
\begin{figure}[!htb]
\begin{center} 
\begin{tikzpicture}[scale=0.5]

  \fill(3,1.8) circle(3pt);

 \draw(3.8,1.8)node{$z_{j+3}$};

 \filldraw[fill=white](1.5,3) circle(3pt);
\fill(1.5,2) circle(3pt);
 \draw(1.5,2)--(1.5,3);
 \draw(2.2,2.3)node{$z_{j+2}$};
 \draw(1,3.5)node{$y_{j+2}$};
  
 \filldraw[fill=white](-0.5,2) circle(3pt);
\fill(0,1) circle(3pt);
   \draw(-0.5,2)--(0 ,1);
  \draw(-0.8,1)node{$z_{j+1}$};
 \draw(-1.2,2.3)node{$y_{j+1}$};

  \filldraw[fill=white](-1.2,-0.3) circle(3pt);
 \fill(-0.2,-0.2)  circle(3pt);
 \draw(-1.2,-0.3)--(-0.2,-0.2);
 \draw(-0.4,-0.7)node{$z_{j}$};
 \draw(-1.6,-0.5)node{$y_{j}$};

 \filldraw[fill=white](5 ,1.5) circle(3pt);
\fill(4.3,0)  circle(3pt);
  \draw(5.2,0.1)node{$z_{j+4}$};
 \draw(4.3,0)--(5,1.4);
 \draw(5.8,1.9)node{$y_{j+4}$};
  
 \filldraw[fill=white](3.5,-1) circle(3pt);
 \draw(3.6,-1.3)node{$w_{i+1}$};

\draw (-0.2,-0.2)--(0,1)--(1.5,2)--(3,1.8)--  (4.3,0)--(3.5,-1) --(1.3,-1.2)--cycle;

 \draw(0,1)--(1.3,-1.2);
\draw(1.5,2)--(3.5,-1);
\draw(1.5,2)--(1.3,-1.2);
\draw(3,1.8)--(3.5,-1);

 \filldraw[fill=white](1.3,-1.2) circle(3pt);
\filldraw[fill=white](3.5,-1) circle(3pt);
 \draw(1.8,-1.4)node{$w_i$};
\end{tikzpicture}
\end{center}
\caption{Graph $H'_{w_{i},w_{i+1}}.$}
\label{rrrr}
\end{figure}

If we delete $y_{j+4}$ and $z_{j+3}$ from $H'_{w_{i},w_{i+1}},$ then we have  an induced subgraph isomorphic to $H^*$.
From the fact that $G$ is $H^*-$free we conclude the result.

\end{proof}
 \subsection{Perfect graphs}

 We say that a graph $G$ is $(\gamma_c-\gamma_{\rm wcon})$-{\em perfect}  if 
 $\gamma_c(H)=\gamma_{\rm wcon}(H)$ for every connected induced subgraph $H$ of $G.$

The following result shows that chordal $H^* -$free graphs (where $H^*$ is a graph from Figure~\ref{rr}) are  $(\gamma_c-\gamma_{\rm wcon})$-perfect.
 

\begin{thm} \label{Hchordal} Let $G =(V ,E)$ be a connected chordal graph of order $n$. Then $G$  is $(\gamma_c-\gamma_{\rm wcon})$-perfect  if and only if     $G$ is an $H^* -$free  graph.
\end{thm}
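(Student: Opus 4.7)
The plan is to prove the two directions separately, with the reverse direction falling straight out of Theorem~\ref{chordal} and the forward direction coming from a direct computation on the single graph $H^*$.

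For the $(\Leftarrow)$ direction, I would argue as follows. Suppose $G$ is a connected chordal $H^*$-free graph and let $H$ be any connected induced subgraph of $G$. Since chordality is preserved under taking induced subgraphs, $H$ is chordal. Since $H^*$-freeness is also preserved under taking induced subgraphs (any induced copy of $H^*$ in $H$ would remain induced in $G$), $H$ is $H^*$-free. Theorem~\ref{chordal} then yields $\gc(H)=\gwc(H)$, so $G$ is $(\gamma_c-\gamma_{\rm wcon})$-perfect.

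For the $(\Rightarrow)$ direction I would prove the contrapositive: if $G$ contains $H^*$ as an induced subgraph, then $G$ is not $(\gamma_c-\gamma_{\rm wcon})$-perfect. The witnessing connected induced subgraph is simply $H^*$ itself. Let me label the vertices of $H^*$ as in Figure~\ref{rr}: the central vertex $c$ (adjacent to all four of the other $C_5$ vertices), the four non-central cycle vertices $a,b,b',a'$ arranged so that $a$--$b$--$b'$--$a'$ is an induced path, and four pendant vertices $\tilde a,\tilde b,\tilde b',\tilde a'$ attached to $a,b,b',a'$ respectively. The four supports $\{a,b,b',a'\}$ must lie in every (connected or weakly convex) dominating set, since each pendant has a unique neighbour. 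Moreover this set of four supports induces the path $a$--$b$--$b'$--$a'$ together with the chord $a$--$a'$, hence is connected and dominates $c$ and all pendants; therefore $\gc(H^*)=4$. On the other hand this set is not weakly convex: in $H^*$ one has $d_{H^*}(b,a')=2$ (via $c$), while inside $\{a,b,b',a'\}$ the only $b$--$a'$ path has length $3$. Any minimum weakly convex dominating set must contain the four supports and must also contain an internal vertex certifying $d(b,a')=2$; the only such vertex in $H^*$ is $c$. Hence $\gwc(H^*)\geq 5$, and adding $c$ to the four supports gives a weakly convex dominating set (the induced subgraph has diameter $2$ and every pair of vertices is joined by a $G$-geodesic inside it), so $\gwc(H^*)=5>4=\gc(H^*)$.

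The main obstacle I anticipate is not the inequality $\gwc(H^*)>\gc(H^*)$ itself, which is a finite check, but rather writing the argument in a way that cleanly justifies both that the four supports are forced into every minimum connected dominating set and that no weakly convex dominating set of size $4$ can exist. The cleanest way is to invoke Observation~\ref{ob1} (no simplicial vertex lies in a minimum connected or weakly convex dominating set, while every cut-vertex does) to pin down that the supports are forced and the pendants are forbidden, reducing the whole question to a short inspection of induced subgraphs on at most five non-pendant vertices.
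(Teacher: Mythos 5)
Your proposal follows the same route as the paper: the backward direction is exactly the paper's (chordality and $H^*$-freeness are hereditary, then apply Theorem~\ref{chordal}), and for the forward direction the paper likewise takes $H^*$ itself as the witnessing connected induced subgraph, merely asserting $\gc(H^*)<\gwc(H^*)$ where you spell out the computation. Your verification is right in substance ($\gc(H^*)=4$, $\gwc(H^*)=5$), but the labelling is internally inconsistent: if $a$--$b$--$b'$--$a'$ is an induced path there can be no chord $aa'$ (and indeed $H^*$ has none --- the four supports induce a $P_4$), and the pair witnessing the failure of weak convexity is the endpoint pair $(a,a')$, at distance $3$ inside the support set but distance $2$ in $H^*$ via the central vertex $c$, rather than $(b,a')$, which is already at distance $2$ within the path.
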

\begin{proof} 
 Suppose   $\gc (H)=\gwc (H)$ for every induced subgraph $H$ of $G.$ Then  $H\neq H^*$ since $\gc(H^*)<\gwc(H^*).$  
  
 Now, suppose $G$ is $H^* -$free. Let $H$ be an induced subgraph of $G$. Hence, $H$ is chordal and $H^*-$free. By Theorem~\ref{chordal}, $\gc(H)=\gwc(H)$ and thus, $G$ is $(\gamma_c-\gamma_{\rm wcon})$-perfect. 
 \end{proof}

$ $

Now we generalize the necessary condition for any $(\gamma_c-\gamma_{\rm wcon})$-perfect graph.
 
 For a cycle $C$ of $G,$ denote by $H$ a subgraph $G[N[V(C)]]$ induced by the closed neighbourhood of  the vertices from the cycle $C.$
  In general, we have the following result. 
 \begin{lem}\label{lemma:perfect}
 If a graph $G$ is a perfect graph, then $G$ does not contain induced cycle greater than six and for every cycle (not necessarily induced) $C$ of length five or six one of the two following conditions hold:
 \begin{enumerate}
 \item there are two consecutive vertices of $C$ such that they are not cut-vertices of~$H$;
 \item for every vertex $v$ of $C$ such that $v$ is a cut-vertex of $H$, their neighbours on $C$ are connected by an edge or have a common neighbour on $C$ different then $v$. 
 \end{enumerate} 
 \end{lem}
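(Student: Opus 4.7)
The plan is to prove the lemma by contraposition: assume $G$ is $(\gamma_c-\gamma_{\rm wcon})$-perfect and, in each of the two claims, construct from any postulated failure a connected induced subgraph $H$ of $G$ satisfying $\gc(H)<\gwc(H)$, thereby contradicting perfectness. For the absence of induced cycles of length at least seven, take $H$ to be the putative induced $C_k$ itself. Since $g(C_k)=k\geq 7$ and $C_k$ has no end-vertex, Theorem~\ref{twr2} yields $\gwc(H)=k$, while the known identity $\gc(C_k)=k-2<k$ produces the required strict inequality.

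For the second claim, suppose a cycle $C$ of length $k\in\{5,6\}$ in $G$ violates both (1) and (2), and set $H=G[N[V(C)]]$, a connected induced subgraph of $G$ containing $C$. The negation of (1) says that the non-cut-vertices of $H$ that lie on $C$ form an independent subset of $V(C)$, so every edge of $C$ has at least one endpoint that is a cut-vertex of $H$. The negation of (2) provides a cut-vertex $v\in V(C)$ of $H$ whose two neighbours $u_1,u_2$ on $C$ are non-adjacent in $G$ and admit no common neighbour on $V(C)\setminus\{v\}$. By Observation~\ref{ob1}, every cut-vertex of $H$ belongs to every connected dominating set and every weakly convex dominating set of $H$. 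I would then prove $\gc(H)<\gwc(H)$ via two parallel estimates: on the upper side, exhibit an explicit connected dominating set of $H$ consisting of the cut-vertex set of $H$ augmented by a minimum collection of dominators drawn from $V(H)\setminus V(C)$ that covers the remaining non-cut-vertices of $C$; on the lower side, show that any weakly convex dominating set containing both $u_1$ and $u_2$ must contain a $(u_1,u_2)$-geodesic entirely inside itself, which in view of the failure of (2) either forces $v$ together with surrounding structure into the set or forces an external common neighbour of $u_1,u_2$ in, strictly exceeding the above connected dominating set count.

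The main obstacle is making this comparison airtight. The delicate point is the flexibility offered by common neighbours of $u_1,u_2$ that may lie in $V(H)\setminus V(C)$: their existence is compatible with the failure of (2), which only regulates common neighbours inside $V(C)$, and could in principle shorten the required weakly convex set. Establishing that even with such external shortcuts the weakly convex bound strictly exceeds the connected dominating bound, while treating the cases $k=5$ and $k=6$ (which differ in the admissible independent-set patterns on $C$, of sizes at most two and three respectively) with the correspondingly distinct counting, is where the actual work lies.
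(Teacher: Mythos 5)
Your treatment of the first assertion (no induced cycle of length at least seven) matches the paper's: the cycle itself is the witness subgraph, and $\gc(C_k)=k-2<k=\gwc(C_k)$ by Theorem~\ref{twr2}. The gap is in the second part. You choose $H=G[N[V(C)]]$ as the witness and try to prove $\gc(H)<\gwc(H)$ directly, but this inequality need not hold for that $H$: as you yourself observe, $u_1$ and $u_2$ may have a common neighbour in $V(H)\setminus V(C)$ (the failure of (2) forbids such a neighbour only on $C$), and such an external vertex can realize the $(u_1,u_2)$-geodesic inside a weakly convex dominating set without any extra cost, so that $\gc(H)=\gwc(H)$ is entirely possible for this particular $H$ even though $G$ is not perfect. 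Since these domination parameters are not monotone under passing to induced subgraphs, picking the "wrong" witness is fatal to a proof by contradiction: you must find \emph{some} induced subgraph where the strict inequality provably holds, and $G[N[V(C)]]$ is not it. Your proposal acknowledges this obstacle but does not resolve it, and no amount of case counting on $k=5,6$ fixes the choice of witness.

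The missing idea is to trim the witness down to a small induced subgraph whose structure is completely determined by the two negated conditions. The paper uses the failure of (1) to get (for $k=5$, say, with $C=(a,b,c,d,e,a)$ and $a,b,d$ cut-vertices of $H$) one private neighbour $a',b',d'$ of each cut-vertex with respect to $V(C)$, taken from the components of $H-a$, $H-b$, $H-d$ that miss the rest of $C$; this forces $a'b',b'd',a'd'\notin E(G)$ and that each primed vertex sees only its own cut-vertex on $C$. The failure of (2) then controls the chords of $C$, so $F=G[\{a,a',b,b',c,d,d',e\}]$ is an explicitly known graph (a $5$-cycle with three pendants) with $\gc(F)=4<5=\gwc(F)$; the $k=6$ case is analogous with $F'=G[\{a,a',b,c,c',d,e,e',f\}]$. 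Because every vertex outside $V(C)$ that could create a shortcut has been excluded from the induced subgraph, the comparison you found delicate becomes a finite computation on a fixed graph.
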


 \begin{proof}
 Assume $G$ to be a perfect graph. 
 If there is an induced cycle $C_p$ in $G$ such that $p\geq 7,$ then, since $\gamma_c(C_p)<\gamma_{\rm wcon}(C_p),$ $G$ is not perfect; so we conclude that for a perfect graph $G$ there is no induced cycle of length greater than six. 
 
  Denote by $A$ the set of vertices of $C$ such that they are cut-vertices of $H$ and let $B=V(C)-A$.
 
 Assume $C$ is a  cycle of length five in $G.$ Suppose that (1) does not hold; then $|A|\geq 3,$ $|B|\leq 2$  and $B$ is an independent set. Denote $C=(a,b,c,d,e,a);$ without loss of generality let $a,b,d \in A.$ Since $\{a,b,d\}\subseteq A$, they have their private neighbours with respect to $V(C),$ $a',b',d',$ respectively and $\{a'b', b'd', a'd'\}\notin E(G).$ Suppose that also (2) does not hold - then $C$ is an induced cycle of $G$. In this case $F = G[\{a,a',b,b',c, d,d', e\}]$ is an induced subgraph of $G$ such that $\gamma_c(F)<\gamma_{\rm wcon}(F),$ a contradiction.
 
 Now let $C$ be a cycle of length six in $G.$ If (1) does not hold, then  $|A|\geq 3,$ $|B|\leq 3$  and $B$ is an independent set. Denote $C=(a,b,c,d,e,f,a);$ without loss of generality let $\{a,c,e\} \subseteq A.$  Similarly like in the previous case, these vertices have their private neighbours with respect to $V(C)$, $a', c', e',$ respectively such that $\{a'c', c'e', a'e'\}\notin E(G).$ If also (2) does not hold, then $C$ is an induced cycle of $G$ or there is one chord between vertices of $A$ in $C$. In both cases $F' = G[\{a,a',b,c,c', d,e,e', f\}]$ is an induced subgraph of $G$ such that $\gamma_c(F')<\gamma_{\rm wcon}(F'),$ a contradiction.
 \end{proof}
 
 $ $

Notice that there are graphs which satisfy conditions (1) and (2), but are not
perfect. Example of such a graph is graph $G$ from Figure \ref{example_not_perfect} for which there exists a
cycle $C$ of length five such that there are two consecutive non-cut-vertices $a,b$ of $G$ in
$C$; but $G$ is not perfect (observe that the set of all support vertices $V_S$ is a minimum connected dominating set of $G$ and $V_S\cup \{a,b\}$ is a minimum weakly convex set of $G$).

 \begin{figure}[!htb]\label{rysx}

\begin{center} 
	\begin{tikzpicture}[scale=0.6]
\draw(0,1)--(1,1)--(2,0)--(1,-1)--(0,-1);
\draw(0,-1)--(-1,0)--(0,1);
\draw(0,1)--(-1,1)--(-1,0);
\draw(-1,0)--(-2,0);
\draw(-1,1)--(-2,1);
\draw(1,1)--(1,-1);
\draw(0,-1)--(0,-2);
\draw(1,-1)--(1,-2);
\draw(2,0)--(3,0);

\filldraw[fill=black](1,1) circle(3pt);
\filldraw[fill=black](2,0) circle(3pt);
\filldraw[fill=black](0,1) circle(3pt);
\filldraw[fill=black](1,-1) circle(3pt);
\filldraw[fill=black](-1,0) circle(3pt);
\filldraw[fill=black](0,-1) circle(3pt);
\filldraw[fill=black](-1,1) circle(3pt);
\filldraw[fill=black](-2,1) circle(3pt);
\filldraw[fill=black](-2,0) circle(3pt);
\filldraw[fill=black](0,-2) circle(3pt);
\filldraw[fill=black](1,-2) circle(3pt);
\filldraw[fill=black](3,0) circle(3pt);

\draw(0,1.3)node{$_a$};
\draw(1,1.3) node{$_b$};

\end{tikzpicture}
\end{center}
\caption{Example of a graph, which satisfies two conditions from Lemma \ref{lemma:perfect}, but is not perfect.}
\label{example_not_perfect}
\end{figure}
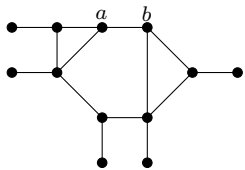


  
 \section{Edge removing}
 In this section we will analyze the influence of the edge removing on the   domination numbers for   graphs such   that every vertex is simplicial or cut-vertex
  and also for unicycle graphs. As consequence we will conclude that  weakly convex domination number is   an interpolating function.

First, we see that in general, deleting an edge can arbitrarily increase
and arbitrarily decrease the weakly convex domination number.

We say that edge $e\in E(G)$ is a {\em cut edge} if $G-e$ is not connected.

\begin{thm} For every integer $k$ there is a connected graph $G$ and an edge not cut $e\in E(G)$ such that $$\gwc (G-e)-\gwc (G)=k.$$
\end{thm}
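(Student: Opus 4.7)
My plan is to split into three cases according to the sign of $k$.

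For $k=0$, I would take $G$ to be two triangles glued at a common vertex $v$, with $e$ any edge of one triangle not incident with $v$; then $\{v\}$ is a weakly convex dominating set of both $G$ and $G-e$, giving $\gwc(G)=\gwc(G-e)=1$, and $e$ is not a cut edge since the other triangle keeps $G-e$ connected.

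For $k<0$, set $t=-k\geq 1$ and reuse the family $G_t$ of Theorem~2.1 together with the edge $e=v_1u_3$. The edge $e$ is not a cut edge because the cycle $C_{2t+6}$ provides alternative $v_1$--$u_3$ routes, and Theorem~2.1 already gives $\gwc(G_t)=2t+4$; so the task reduces to showing $\gwc(G_t-e)=t+4$. The plan is to take $D=\{x_1,v_1,\ldots,v_{t+2},x_2\}$, the set of all supports: after removal of $e$ the distance $d_{G_t-e}(v_1,x_2)=t+2$ is realized by the $v$-path, which lies in $D$, making $D$ weakly convex; moreover $D$ is connected (it is exactly the $v$-path together with the endpoints $x_1,x_2$) and dominating (each $u_i$ is covered by the rung $v_iu_i$ and each pendant by its support), and the $t+4$ supports are forced in any dominating set. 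Hence $\gwc(G_t-e)-\gwc(G_t)=(t+4)-(2t+4)=-t=k$.

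For $k>0$, set $t=k+1\geq 2$ and define $H_t$ by adding to $G_t$ the new edge $e=u_3v_{t+2}$ (genuinely new for $t\geq 2$); since $H_t-e=G_t$ is connected, $e$ is not a cut edge. The plan is to show that the added edge creates a length-$3$ geodesic $v_1-u_3-v_{t+2}-x_2$, which lowers $\gwc(H_t)$ down to $t+5$, witnessed by the set $D'=\{x_1,v_1,\ldots,v_{t+2},x_2,u_3\}$: $D'$ is connected, dominating, and weakly convex because the new short geodesic lies in $D'$ and every other relevant geodesic can be routed through the $v$-path. Combined with $\gwc(H_t-e)=\gwc(G_t)=2t+4$ this gives $\gwc(H_t-e)-\gwc(H_t)=(2t+4)-(t+5)=t-1=k$.

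The main obstacle is the lower bound $\gwc(H_t)\geq t+5$ in the positive case: one has to argue that $u_3$ must belong to every weakly convex dominating set of $H_t$. This will follow from a short case analysis on the neighborhoods of $v_1$ and $x_2$ showing that every $(v_1,x_2)$-path of length $3$ in $H_t$ passes through $u_3$, so weak convexity forces $u_3\in D$ on top of the $t+4$ supports.
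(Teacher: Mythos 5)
Your plan is correct in substance, but it takes a genuinely different route from the paper in two of the three cases. For $k<0$ the paper uses the same family $G_t$ but deletes the cycle edge $u_{t+2}x_2$, whereas you delete the chord $v_1u_3$; both work, since in either case the deletion destroys the short $u$-route that forces $u_3,\dots,u_{t+2}$ into the dominating set, and the set of supports becomes weakly convex (one still has to check all pairs, e.g.\ $(x_1,x_2)$, not only $(v_1,x_2)$, but the $v$-path realizes every relevant geodesic). For $k>0$ the paper builds a fresh ``ladder'' graph on $C_{2k+2}$ with pendants and rungs and deletes $x_1v_1$, which reroutes the unique $x_1$--$x_2$ geodesic through the entire $u$-path and raises $\gwc$ from $k+2$ to $2k+2$; you instead add the edge $u_3v_{t+2}$ to $G_t$ so that deleting it recovers $G_t$ itself. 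Your version is more economical (one family serves both signs) but puts the burden on the new lower bound $\gwc(H_t)\geq t+5$; your sketch of that bound is right: for $t\geq 2$ one checks $N(v_1)\cap N(x_2)=\emptyset$, so $d_{H_t}(v_1,x_2)=3$, and every length-$3$ path $v_1$--$a$--$b$--$x_2$ forces $a=u_3$ (with $b=v_{t+2}$, or $b=u_4$ when $t=2$), while connectivity of a weakly convex dominating set forces all $t+4$ supports, so $u_3$ is an additional forced vertex.

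One point you must patch before this is a proof: you invoke ``Theorem 2.1 already gives $\gwc(G_t)=2t+4$'' for $t=-k\geq 1$ and $t=k+1\geq 2$, but Theorem 2.1 is stated only for $k\geq 6$, so for $t\in\{1,\dots,5\}$ the citation does not apply. The equality $\gwc(G_t)=2t+4$ does hold for all $t\geq 1$ (the supports are forced, and the unique $(v_1\mbox{--}x_2)$-geodesic of length $t+1$ runs through $u_3,\dots,u_{t+2}$), but you need to rerun that argument explicitly for the small values of $t$ rather than cite the theorem.
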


\begin{proof}
If $k=0$, then $G=C_3$. For $k>0$ we show the construction of a graph $G$ such that $ \gwc (G-e)-\gwc (G)=k$ for some edge $e \in E(G)$. We begin with a cycle $C_{2k+2}=(x_1,v_1,v_2,\ldots ,v_k,x_2,u_k,u_{k-1},\ldots u_1,x_1)$.  Next we add to this cycle: new vertices $x'_1,x'_2,v'_i$ and edges $x_1x'_1,x_2x'_2, v_iv'_i, v_iu_i$ where $1\leq i\leq k$.

The final graph is illustrated in the Figure~\ref{rys1}. It is easy to observe that all supports of $G$ form a minimum weakly convex dominating set of $G$, so $\gwc (G)=k+2$. Let $G'=G-x_1v_1$. Hence, all supports again belong to the minimum weakly convex dominating set of $G'$. But the distance between $x_1$ and $x_2$ equals to $k+1$ and the only shortest $(x_1-x_2)$-path consists of vertices $u_1,u_2,\ldots ,u_k$. It implies that $\gwc (G)\geq 2k+2$. On the other hand, all but leaves vertices of $G$ form a weakly convex dominating set of $G'$, i.e. $\gwc (G)\leq 2k+2$. Hence, $\gwc (G')=2k+2$. Summing up, we obtained that $\gwc (G')-\gwc(G)=k$. 

Next, for $k<0$ we show the construction of a graph such that $ \gwc (G-e)-\gwc (G)=k$ for some edge $e$ of $G$. We begin with a cycle $C_{2|k|+6}=(x_1,v_1,v_2,\ldots ,v_{|k|+2},x_2,u_{|k|+2},u_{|k|+1},\ldots u_1,x_1).$  Next we add to this cycle: new vertices $x'_1,x'_2,v'_i$ and edges $x_1x'_1,x_2x'_2, v_iv'_i, v_iu_i$ where $1\leq i\leq |k|+2$, and edge $v_1u_3$.
The final graph is illustrated in the Figure~\ref{rys2}. Let $D$ be a minimum weakly convex dominating set of $G$. All supports of $G$ belong to $D$. Moreover, the distance between $v_1$ and $x_2$ equals to $|k|+1$ and $u_3,u_4,\ldots ,u_{|k|+2}$ belong to the shortest $(v_1-x_2)$-path, so they also belongs to $D$.  On the other hand, $V-(V_L\cup \{u_1,u_2\})$ is a weakly convex dominating set of $G$. Hence, $\gwc (G)=2|k|+4$. Let us consider a graph $G'=G-u_{|k|+2}x_2$
 It is easy to check, that the set of all supports of $G'$ is a minimum weakly convex dominating set of $G'$, and so $\gwc(G')=|k|+4$. Finally, $\gwc(G')-\gwc(G)=k$. 
\end{proof}

\begin{figure}[!htb]
\begin{center} 
	\begin{tikzpicture}[scale=0.6]

\fill(0,1) circle(3pt);
\fill(8,1) circle(3pt);
\draw (0,1)--(0,2);
\draw (8,1)--(8,2);

\filldraw[fill=white](0,2) circle(3pt);
\filldraw[fill=white](8,2) circle(3pt);
\draw (4,0)--(1,0)--(0,1)--(1,2)--(4,2);
\draw[dotted](4,2)--(6,2);
\draw (6,2)--(7,2)--(8,1)--(7,0)--(6,0);
\draw[dotted](6,0)--(4,0);

\foreach \nn in {1,2,3,4,6,7}
			{	\filldraw[fill=black] (\nn,2) circle (3pt);\draw(\nn,2)--(\nn,3);\draw (\nn,0)--(\nn,2);
				\filldraw[fill=white] (\nn,3) circle (3pt);;
				}
				\foreach \nn in {1,2,3,4,6,7}
				\filldraw[fill=white] (\nn,0) circle (3pt);
\draw(0,0.6)node{$_{x_1}$};\draw(8,0.6)node{$_{x_2}$};
\draw(0.7,2.2)node{$_{v_1}$};\draw(1.7,2.2)node{$_{v_2}$};\draw(2.7,2.2)node{$_{v_3}$};\draw(3.7,2.2)node{$_{v_4}$};\draw(5.5,2.2)node{$_{v_{k-1}}$};\draw(6.7,2.2)node{$_{v_k}$};
\draw(0,2.4)node{$_{x'_1}$};\draw(8,2.4)node{$_{x'_2}$};
\draw(0.7,3.4)node{$_{v'_1}$};\draw(1.7,3.4)node{$_{v'_2}$};\draw(2.7,3.4)node{$_{v'_3}$};\draw(3.7,3.4)node{$_{v'_4}$};\draw(5.5,3.4)node{$_{v'_{k-1}}$};\draw(6.7,3.4)node{$_{v'_k}$};
\draw(1,-0.4)node{$_{u_1}$};\draw(2,-0.4)node{$_{u_2}$};\draw(3,-0.4)node{$_{u_3}$};\draw(4,-0.4)node{$_{u_4}$};\draw(6,-0.4)node{$_{u_{k-1}}$};\draw(7,-0.4)node{$_{u_k}$};
\draw(4,-1.5)node{ $_G$};
\end{tikzpicture}\hspace{1cm}
\begin{tikzpicture}[scale=0.6]

\fill(0,1) circle(3pt);
\fill(8,1) circle(3pt);
\draw (0,1)--(0,2);
\draw (8,1)--(8,2);

\filldraw[fill=white](0,2) circle(3pt);
\filldraw[fill=white](8,2) circle(3pt);
\draw (4,0)--(1,0)--(0,1);\draw(1,2)--(4,2);
\draw[dotted](4,2)--(6,2);
\draw (6,2)--(7,2)--(8,1)--(7,0)--(6,0);
\draw[dotted](6,0)--(4,0);

\foreach \nn in {1,2,3,4,6,7}
			{	\filldraw[fill=black] (\nn,2) circle (3pt);\draw(\nn,2)--(\nn,3);\draw (\nn,0)--(\nn,2);
				\filldraw[fill=white] (\nn,3) circle (3pt);;
				}
				\foreach \nn in {1,2,3,4,6,7}
				\filldraw[fill=black] (\nn,0) circle (3pt);
\draw(0,0.6)node{$_{x_1}$};\draw(8,0.6)node{$_{x_2}$};
\draw(0.7,2.2)node{$_{v_1}$};\draw(1.7,2.2)node{$_{v_2}$};\draw(2.7,2.2)node{$_{v_3}$};\draw(3.7,2.2)node{$_{v_4}$};\draw(5.5,2.2)node{$_{v_{k-1}}$};\draw(6.7,2.2)node{$_{v_k}$};
\draw(0,2.4)node{$_{x'_1}$};\draw(8,2.4)node{$_{x'_2}$};
\draw(0.7,3.4)node{$_{v'_1}$};\draw(1.7,3.4)node{$_{v'_2}$};\draw(2.7,3.4)node{$_{v'_3}$};\draw(3.7,3.4)node{$_{v'_4}$};\draw(5.5,3.4)node{$_{v'_{k-1}}$};\draw(6.7,3.4)node{$_{v'_k}$};
\draw(1,-0.4)node{$_{u_1}$};\draw(2,-0.4)node{$_{u_2}$};\draw(3,-0.4)node{$_{u_3}$};\draw(4,-0.4)node{$_{u_4}$};\draw(6,-0.4)node{$_{u_{k-1}}$};\draw(7,-0.4)node{$_{u_k}$};
\draw(4,-1.5)node{ $_{G'=G-x_1v_1}$};
\end{tikzpicture}
\end{center}
\caption{Construction of a graph $G$ such that $ \gwc (G-e)-\gwc (G)=k$ for some edge $e$, where $k$ is a positive integer.}\label{rys1}
\end{figure}
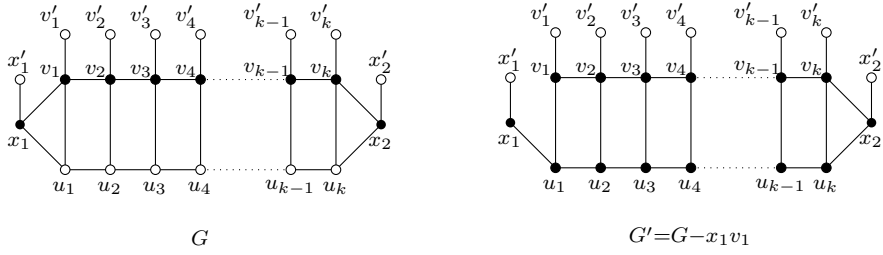
\begin{figure}[!htb]
\begin{center} 
	\begin{tikzpicture}[scale=0.6]

\fill(0,1) circle(3pt);
\fill(8,1) circle(3pt);
\draw (0,1)--(0,2);
\draw (8,1)--(8,2);

\draw(1,2)--(3,0);

\filldraw[fill=white](0,2) circle(3pt);
\filldraw[fill=white](8,2) circle(3pt);
\draw (4,0)--(1,0)--(0,1)--(1,2)--(4,2);
\draw[dotted](4,2)--(6,2);
\draw (6,2)--(7,2)--(8,1)--(7,0)--(6,0);
\draw[dotted](4,0)--(6,0);

\draw(0,2.4)node{$_{x'_1}$};\draw(8,2.4)node{$_{x'_2}$};
				\draw(5.7,3.4)node{$_{v'_{|k|+1}}$};\draw(7,3.4)node{$_{v'_{|k|+2}}$};
\foreach \nn in {1,2,3,4,6,7}
			{	\filldraw[fill=black] (\nn,2) circle (3pt);\draw(\nn,2)--(\nn,3);\draw (\nn,0)--(\nn,2);
				\filldraw[fill=white] (\nn,3) circle (3pt); 
				}
				\foreach \nn in {1,2,3,4}{
				\draw(\nn-0.3,3.4)node{$_{v'_{\nn}}$};}
				\foreach \nn in {3,4,6,7}
				\filldraw[fill=black] (\nn,0) circle (3pt);\filldraw[fill=white](1,0) circle(3pt);\filldraw[fill=white](2,0) circle(3pt);
\draw(0,0.6)node{$_{x_1}$};\draw(8,0.6)node{$_{x_2}$};
\draw(0.7,2.2)node{$_{v_1}$};\draw(1.7,2.2)node{$_{v_2}$};\draw(2.7,2.2)node{$_{v_3}$};\draw(3.7,2.2)node{$_{v_4}$};\draw(5.4,2.2)node{$_{v_{|k|+1}}$};\draw(6.9,2.2)node{$_{v_{|k|+2}}$};\draw(1,-0.4)node{$_{u_1}$};\draw(2,-0.4)node{$_{u_2}$};\draw(3,-0.4)node{$_{u_3}$};\draw(4,-0.4)node{$_{u_4}$};\draw(7.4,-0.4)node{$_{u_{|k|+2}}$};\draw(6,-0.4)node{$_{u_{|k|+1}}$};
\draw(4,-1.5)node{ $_G$};
\end{tikzpicture}\vspace{1cm}
\begin{tikzpicture}[scale=0.6]

\fill(0,1) circle(3pt);
\fill(8,1) circle(3pt);
\draw (0,1)--(0,2);
\draw (8,1)--(8,2);

\draw(1,2)--(3,0);

\filldraw[fill=white](0,2) circle(3pt);
\filldraw[fill=white](8,2) circle(3pt);
\draw (4,0)--(1,0)--(0,1)--(1,2)--(4,2);
\draw[dotted](4,2)--(6,2);
\draw (6,2)--(7,2)--(8,1);\draw(7,0)--(6,0);
\draw[dotted](4,0)--(6,0);

\draw(0,2.4)node{$_{x'_1}$};\draw(8,2.4)node{$_{x'_2}$};
				\draw(5.7,3.4)node{$_{v'_{|k|+1}}$};\draw(7,3.4)node{$_{v'_{|k|+2}}$};
\foreach \nn in {1,2,3,4,6,7}
			{	\filldraw[fill=black] (\nn,2) circle (3pt);\draw(\nn,2)--(\nn,3);\draw (\nn,0)--(\nn,2);
				\filldraw[fill=white] (\nn,3) circle (3pt); 
				}
				\foreach \nn in {1,2,3,4}{
				\draw(\nn-0.3,3.4)node{$_{v'_{\nn}}$};}
				\foreach \nn in {3,4,6,7}
				\filldraw[fill=white] (\nn,0) circle (3pt);\filldraw[fill=white](1,0) circle(3pt);\filldraw[fill=white](2,0) circle(3pt);
\draw(0,0.6)node{$_{x_1}$};\draw(8,0.6)node{$_{x_2}$};
\draw(0.7,2.2)node{$_{v_1}$};\draw(1.7,2.2)node{$_{v_2}$};\draw(2.7,2.2)node{$_{v_3}$};\draw(3.7,2.2)node{$_{v_4}$};\draw(5.4,2.2)node{$_{v_{|k|+1}}$};\draw(6.9,2.2)node{$_{v_{|k|+2}}$};\draw(1,-0.4)node{$_{u_1}$};\draw(2,-0.4)node{$_{u_2}$};\draw(3,-0.4)node{$_{u_3}$};\draw(4,-0.4)node{$_{u_4}$};\draw(7.4,-0.4)node{$_{u_{|k|+2}}$};\draw(6,-0.4)node{$_{u_{|k|+1}}$};
\draw(4,-1.5)node{ $_{G'=G-u_{|k|+2}x_2}$};

\end{tikzpicture}
\end{center}
\caption{Construction of a graph $G$ such that $ \gwc (G-e)-\gwc (G)=k$ for some edge $e$, where $k$ is a negative integer. }\label{rys2}

\end{figure}
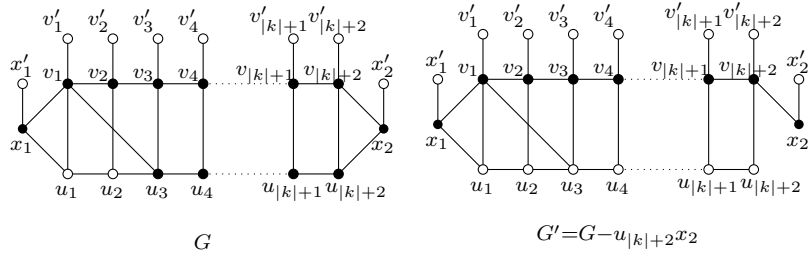


In \cite{lemanska2} is shown that removing an edge can not decrease the connected domination number, but it can increase it by at most two:
\begin{thm}{\rm \cite{lemanska2}}
If $e$ is an edge of $G$ and if $G$ and $G-e$ are connected, then $$\gc (G)\leq \gc (G-e)\leq \gc (G)+2.$$
\end{thm}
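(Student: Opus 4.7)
The plan is to prove the two inequalities separately. The lower bound $\gc(G) \leq \gc(G-e)$ is immediate: if $D$ is any connected dominating set of $G-e$, then adding the edge $e$ back only increases the edge set of $G[D]$ (so $G[D]$ remains connected) and only enlarges closed neighbourhoods (so $D$ still dominates), hence $D$ is a connected dominating set of $G$; taking a minimum such $D$ yields the inequality.

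For the upper bound $\gc(G-e) \leq \gc(G)+2$, I would start with a minimum connected dominating set $D$ of $G$, let $e = uv$, and split the analysis by how $e$ sits with respect to $D$. If at most one of $u,v$ lies in $D$, then $uv \notin E(G[D])$, so $(G-e)[D] = G[D]$ remains connected; $D$ still dominates in $G-e$ unless, say, $u \in D$, $v \notin D$ and $u$ is the only $D$-neighbour of $v$, in which case I pick any $w \in N_{G-e}(v)$ (non-empty since $G-e$ is connected on $\geq 3$ vertices), note that $w \notin D$ so $w$ has a neighbour in $D$, and observe that the edge from $w$ to that neighbour survives in $G-e$; hence $D \cup \{w\}$ is a CDS of $G-e$ of size $\gc(G)+1$. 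The case $u,v \in D$ with $uv$ not a bridge of $G[D]$ is also harmless, since $(G-e)[D]$ is still connected and $D$ still dominates.

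The substantive sub-case is when $u,v \in D$ and $uv$ is a bridge of $G[D]$. I would write the two components of $G[D]-uv$ as $D_u \ni u$ and $D_v \ni v$, and let $A_u$, respectively $A_v$, be the set of vertices of $V \setminus D$ adjacent in $G$ to some vertex of $D_u$, respectively $D_v$; domination gives $V \setminus D = A_u \cup A_v$. If $A_u \cap A_v \neq \emptyset$, any common vertex $w$ yields a CDS $D \cup \{w\}$ of $G-e$ of size $\gc(G)+1$, as both edges from $w$ into $D_u$ and into $D_v$ survive in $G-e$ (since $w \notin \{u,v\}$). Otherwise $A_u, A_v$ partition $V \setminus D$, and because $uv$ was the only $D_u$--$D_v$ edge of $G$ and no vertex of $A_v$ has a $D_u$-neighbour (and symmetrically), the only edges of $G-e$ crossing the bipartition $(D_u \cup A_u,\ D_v \cup A_v)$ go between $A_u$ and $A_v$; connectedness of $G-e$ then forces an edge $a_u a_v$ with $a_u \in A_u$, $a_v \in A_v$, and $D \cup \{a_u,a_v\}$ is a CDS of $G-e$ of size $\gc(G)+2$. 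The main obstacle is precisely this last sub-case: one must identify the correct partition of $V\setminus D$, verify that in the bridge situation all cross-edges of the bipartition $(D_u\cup A_u,\ D_v\cup A_v)$ are squeezed into the $A_u$--$A_v$ bipartite piece, and then invoke the connectivity of $G-e$ to produce a single reconnecting edge; once this structural observation is in place, two extra vertices clearly suffice and the bound is tight, as illustrated by a $4$-cycle with pendants on two opposite vertices.
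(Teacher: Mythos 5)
Your proof is correct. Note, however, that the paper does not prove this statement at all: it is quoted verbatim from \cite{lemanska2}, so there is no in-paper argument to compare against. Your write-up is a sound self-contained proof and follows the natural (and, as far as I can tell, standard) route: the lower bound by monotonicity of connected domination under edge addition, and the upper bound by a case analysis on how $e=uv$ meets a minimum connected dominating set $D$, with the only substantive case being $u,v\in D$ and $uv$ a bridge of $G[D]$; there your partition of $V\setminus D$ into $A_u\cup A_v$ and the observation that all surviving cross-edges of $(D_u\cup A_u,\ D_v\cup A_v)$ lie between $A_u$ and $A_v$ correctly forces a reconnecting edge $a_ua_v$, giving the $+2$. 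One small slip in your closing aside: a $4$-cycle with pendants on two \emph{opposite} vertices does not witness tightness (there $\gc(G)=3$ and $\gc(G-e)=3$ for every cycle edge $e$, since the two non-support cycle vertices can each reconnect the supports); you want the pendants on two \emph{adjacent} vertices $a,b$, where $\gc(G)=2$ but deleting $ab$ leaves a path $P_6$ with $\gc(P_6)=4$. This does not affect the validity of the proof of the stated inequalities.
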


Hence, $\gc(G-e)-\gc(G)\in\{0,1,2\}$. 

We are going to study the influence of removing an edge over the weakly convex domination number when we consider   unicycle graphs or graphs such that every vertex is simplicial or cut-vertex.


\subsection{Graphs such that every vertex is simplicial or cut-vertex}

In this part, we consider graphs with at least three vertices   such that every vertex is simplicial or cut-vertex. Examples of such graphs are block graphs. We get another example  if we consider graphs $G$ with  $g(G)\geq 7$ and $\gwc (G)= \gamma_{c}(G)$  (see  Theorem~\ref{girth}).
 
Observe that if each vertex is a simplicial or a cut-vertex  in $G$ then  {by Corollary~\ref{cut_simplicial}} the unique minimum connected, and  also weakly convex, dominating set of $G$ is the set of all cut-vertices of $G$. 

\begin{lem}\label{lem1}{Let $G$ be a graph such that each vertex is either a cut-vertex or a simplicial vertex. If $e$ is not a cut edge of  $G,$ then $$\gamma_{c}(G)\leq \gamma_{c}(G-e)\leq \gamma_{c}(G)+1.$$}
\end{lem}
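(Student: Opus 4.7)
The lower bound is the Lema\'nska-type theorem quoted just above, applied to $G$ and $G-e$ (both connected, since $e$ is not a cut edge); the plan is then to exhibit a connected dominating set of $G-e$ of size at most $|V_C|+1$. By Corollary~\ref{cut_simplicial}, the set $V_C$ of cut-vertices of $G$ is the unique minimum connected dominating set of $G$, so $|V_C|=\gc(G)$. I set aside the trivial case $G=K_n$ (where the claim is immediate) and let $B$ be the block of $G$ containing $e$, which is $2$-connected with $|V(B)|\geq 3$ since $e$ is not a cut edge.

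I would split cases according to which of $u,v$ belong to $V_C$. If both are simplicial, neither sits in $V_C$, so $(G-e)[V_C]=G[V_C]$ stays connected and the cut-vertex neighborhood of every vertex is preserved; here $V_C$ itself is already a CDS of $G-e$. If exactly one of $u,v$, say $u$, is simplicial while $v\in V_C$, connectivity is again kept, and the only way domination can fail is if $v$ was the unique cut-vertex neighbor of $u$; in that subcase I would adjoin any $w\in N_G(u)\setminus\{v\}$, which exists because $\deg_G(u)\geq 2$ (as $e$ is not a bridge). Because $u$ is simplicial, $N_G[u]$ is a clique and $w$ is adjacent to $v$, so $V_C\cup\{w\}$ is connected in $G-e$, dominates $u$, and has size $|V_C|+1$.

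The remaining and main case is $u,v\in V_C$. Here $V_C$ still dominates $G-e$, so only the connectivity of $(G-e)[V_C]=G[V_C]-uv$ is at issue; if it stays connected I am done, so I may assume it splits into components $C_u\ni u$ and $C_v\ni v$. My plan is then to find a simplicial vertex $w\in V(B)\setminus\{u,v\}$ adjacent to both $u$ and $v$, because then $V_C\cup\{w\}$ reconnects via the length-two path through $w$ while remaining dominating. To produce such $w$, I would take a shortest $u-v$ path $P=u,w_1,\dots,w_k,v$ inside the connected graph $B-e$; note $k\geq 1$ because the edge $uv$ is excluded. If $k\geq 2$ and any internal $w_i$ were simplicial, the clique property of $N_G(w_i)$ would force an edge between its two neighbors on $P$, giving a strictly shorter $u-v$ path in $B-e$, contradicting the minimality of $P$; so every $w_i$ would have to be a cut-vertex, whence $P$ itself would be a $u-v$ path in $G[V_C]-uv$, contradicting the assumption that $V_C$ splits. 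Hence $k=1$, $P$ has length $2$, and it yields a vertex $w$ adjacent to both endpoints; $w$ cannot be a cut-vertex (else $u,w,v$ is again a path in $G[V_C]-uv$), so $w$ is simplicial, as required.

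The step I expect to be the main obstacle is this last claim inside the main case: one must rule out the possibility that every alternative $u-v$ path in $B-e$ is long while $V_C$ nevertheless disconnects upon removal of $uv$. The shortcut argument above, which leverages the clique structure of each simplicial neighborhood to compress any long detour down to length two, is the key device that makes the single-vertex augmentation suffice.
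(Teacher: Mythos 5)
Your proposal is correct and follows the same high-level plan as the paper: identify $V_C$ as the unique minimum connected dominating set via Corollary~\ref{cut_simplicial}, split into cases according to how many endpoints of $e=uv$ are cut-vertices, and augment $V_C$ by at most one vertex. The difference is in the execution of the case where $\{u,v\}$ meets $V_C$. The paper simply asserts there that, since $uv$ is not a cut edge, $u$ and $v$ have a common neighbour $v'$, and takes $V_C\cup\{v'\}$. That assertion is not literally true for this class of graphs (take $C_5$ with a pendant leaf attached to each cycle vertex: every vertex is a cut-vertex or simplicial, a cycle edge $uv$ is not a bridge, yet $N_G(u)\cap N_G(v)=\emptyset$); the paper's conclusion survives in such examples only because $V_C$ happens to remain connected in $G-e$, which the paper does not observe. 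Your version supplies exactly the missing reasoning: you note that a common neighbour is needed only when $(G-e)[V_C]$ actually disconnects, and in that situation you derive the existence of a \emph{simplicial} common neighbour by taking a shortest $u$--$v$ path in $B-e$ and using the clique structure of simplicial neighbourhoods to shortcut any longer detour (any internal simplicial vertex forces a chord, and an all-cut-vertex path would contradict the disconnection). Your treatment of the mixed case (one simplicial endpoint, with domination of $u$ possibly failing) is likewise more careful than the paper's, and your lower bound via the quoted theorem from the literature is an acceptable alternative to the paper's one-line direct argument. In short, your proof is a strictly more rigorous rendering of the paper's argument and in fact repairs a genuine gap in it.
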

\begin{proof}
Let $D_0$ be a minimum connected dominating set of $G-e$. Then $D_0$ is also a connected dominating set of $G$ and $\gamma_c(G)\leq |D_0|=\gamma_c(G-e).$

Now let $D$ be a minimum connected dominating set of $G.$ Then $D=V_C.$ We consider two cases  for $e=uv$:\\
$Case$ $1.$ If $u,v\in V-D$, then $u,v$ are simplicial vertices. Then $D=V_C$ is a connected dominating set of $G-e$ and $\gamma_c(G-e)\leq |D|=\gamma_c(G)\leq \gamma_c(G)+1.$\\
$Case$ $2.$ If $|\{u,v\}\cap D|\geq 1$, then since $uv$ is not a cut edge, there is a vertex $v'\in N_G(v)\cap N_G(u).$ Then $D\cup \{v'\}$ is a connected dominating set  of $G-e$ and  $\gamma_c(G-e)\leq |D|+1= \gamma_c(G)+1.$

\end{proof}

\begin{lem}\label{lem3}   {Let $G$ be a graph such that each vertex is either a cut-vertex or a simplicial vertex. If $e$ is not a cut edge of $G$, then $\gamma_{c}(G-e)=\gwc(G-e).$}
\end{lem}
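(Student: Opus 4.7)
My approach is to exhibit a weakly convex dominating set of $G-e$ of size $\gc(G-e)$; together with the automatic inequality $\gc\le\gwc$ this will yield the claimed equality. The starting observation is that every cut-vertex $x$ of $G$ remains a cut-vertex of $G-e$, since $(G-x)-e$ is disconnected whenever $G-x$ is; hence $V_C\subseteq V_C(G-e)$, and by Observation~\ref{ob1} every minimum connected or weakly convex dominating set of $G-e$ contains $V_C$. Combined with Lemma~\ref{lem1}, every MCD of $G-e$ has the form $V_C$ or $V_C\cup\{w\}$ for a single additional vertex $w$.

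Following the case split in the proof of Lemma~\ref{lem1}, I will consider the endpoints $u,v$ of $e$. If both are simplicial in $G$, then $e\notin E(G[V_C])$, so the induced subgraph $(G-e)[V_C]=G[V_C]$ stays connected, and each simplicial endpoint remains dominated in $G-e$ by a cut-vertex of its common clique block (such a cut-vertex exists since $G\neq K_n$); hence $D:=V_C$ is an MCD of $G-e$. If at least one of $u,v$ lies in $V_C$, I will invoke the construction in the proof of Lemma~\ref{lem1} to obtain a common neighbour $w\in N_G(u)\cap N_G(v)$ and set $D:=V_C\cup\{w\}$; this is a connected dominating set of $G-e$ of size $|V_C|+1=\gc(G-e)$.

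The substantive step is then verifying weak convexity of $D$ in $G-e$. My key tool is a shortcut principle: whenever a simplicial vertex $z$ appears on a path, its predecessor and successor lie in the clique $N_G[z]$ and so are joined by an edge of $G$, which survives in $G-e$ unless it equals $e$ itself. Applying this, any $G-e$-geodesic between two vertices of $V_C$ can be rewritten as a path inside $V_C\cup\{w\}$ of the same length, because the only simplicial vertices that can persist on such a geodesic are common neighbours of $u$ and $v$, each of which plays the same role as $w$; pairs involving $w$ reduce to the previous case because the $D$-neighbours of $w$ in $G-e$ are exactly $\{u,v\}$. The main obstacle will be the second case, in which one must verify that the length-two detour $u$--$w$--$v$ realises $d_{G-e}(u,v)$ and that no alternative simplicial shortcut outside $D$ yields a strictly shorter $V_C$-to-$V_C$ geodesic; this reduces to the structural fact that any common neighbour of $u$ and $v$ in $G$ must lie in the unique block of $G$ containing the edge $uv$, so every length-two $u$--$v$ path in $G-e$ is captured by the chosen $w$.
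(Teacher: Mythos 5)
Your overall route is the same as the paper's: split on whether the endpoints of $e$ are simplicial or cut-vertices, exhibit $V_C$ or $V_C\cup\{w\}$ (with $w$ a common neighbour of $u$ and $v$) as the candidate set, and verify it is a minimum connected dominating set that is also weakly convex. The one place where you go beyond the paper is the verification of weak convexity via your ``shortcut principle'' (an internal simplicial vertex of a geodesic in $G-e$ forces its predecessor and successor to be exactly $u$ and $v$); the paper simply asserts that the exhibited sets are weakly convex, so this added detail is welcome and, as far as I can check, sound.

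There are, however, two genuine gaps. First, in your second case you take an arbitrary common neighbour $w$ and claim $|V_C\cup\{w\}|=|V_C|+1=\gc(G-e)$. If some common neighbour of $u$ and $v$ is itself a cut-vertex, then $V_C$ alone is already a connected dominating set of $G-e$ (it dominates a simplicial endpoint of $e$ through that common neighbour and stays connected through it), so $\gc(G-e)=|V_C|$ and your set is one vertex too large; your strategy of exhibiting a weakly convex dominating set of size exactly $\gc(G-e)$ then only yields $\gwc(G-e)\le \gc(G-e)+1$. You must choose $w\in V_C$ whenever $N_G(u)\cap N_G(v)\cap V_C\neq\emptyset$, which is precisely the sub-case distinction the paper makes. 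Second, and more seriously, the step you import from Lemma~\ref{lem1} --- that a non-cut edge $uv$ with an endpoint in $V_C$ admits a common neighbour --- does not follow from the hypotheses: take $C_4$ with a pendant vertex attached to each of its four vertices; every vertex is a cut-vertex or a leaf, a cycle edge $uv$ is not a cut edge, yet $N_G(u)\cap N_G(v)=\emptyset$. (The conclusion of the lemma still holds there, since $G-uv$ is a tree and $D=V_C$ works with no added vertex.) This flaw is present in the paper's own proofs of Lemmas~\ref{lem1} and~\ref{lem3}, so you inherit rather than introduce it, but your write-up leans on it explicitly and needs either a separate argument for the existence of $w$ or an additional case, handled directly with $D=V_C$, covering the situation in which $u$ and $v$ have no common neighbour.
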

\begin{proof}
We consider two cases:\\
$Case$ $1.$ $e=uv,$ where both $u,v$ are simplicial vertices. Then $V_C$ is a minimum connected and weakly convex dominating set of $G-e$ and $|V_C|=\gamma_{c}(G-e)=\gwc(G-e).$\\
$Case$ $2.$ $|\{u,v\}\cap V_C|\geq 1$. Then, since $uv$ is not a cut edge, $|N_G(u)\cap N_G(v)|\geq 1$.
If  $(N_G(u)\cap N_G(v))\cap V_C\not =\emptyset$, then $V_C$ is a minimum connected and weakly convex dominating set of $G-e$ and $\gamma_c(G)=\gwc(G)=|V_C|.$ Otherwise, $V_C\cup \{v'\}$ is a minimum connected and weakly convex dominating set of $G-e$ and $\gamma_c(G)=\gwc(G)=|V_C|+1$ for any $v'\in N_G(u)\cap N_G(v)$.

\end{proof}

From  {Corollary~\ref{cut_simplicial}}, Lemma \ref{lem1} and Lemma \ref{lem3}, we have the following corollary.

\begin{wn}   {Let $G$ be a graph such that each vertex is either a cut-vertex or a simplicial vertex. If $e$ is not a cut edge of $G,$ then $$\gwc(G)\leq \gwc(G-e)\leq \gwc(G)+1.$$}
\end{wn}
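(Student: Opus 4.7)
The plan is essentially a three-line chain of results already proved in the paper, so the work is almost entirely bookkeeping. First, I would apply Corollary~\ref{cut_simplicial} to $G$ itself: since by hypothesis every vertex of $G$ is either a cut-vertex or simplicial, the set $V_C$ is simultaneously a minimum connected and a minimum weakly convex dominating set of $G$, giving the equality $\gc(G)=\gwc(G)$.

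Next, to make the same identification on the side of $G-e$, I would invoke Lemma~\ref{lem3}, which is stated precisely for the present configuration (every vertex simplicial or cut, and $e$ not a cut edge) and yields $\gc(G-e)=\gwc(G-e)$. Note that this step does \emph{not} require $V_C(G-e)$ to coincide with $V_C(G)$; Lemma~\ref{lem3} already handles the case analysis (both endpoints of $e$ simplicial versus at least one endpoint in $V_C$) via the common-neighbour argument.

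Finally, Lemma~\ref{lem1} supplies the two-sided bound $\gc(G)\leq\gc(G-e)\leq\gc(G)+1$ under exactly the same hypotheses. Substituting the two equalities above into this chain gives
\[
\gwc(G)\;=\;\gc(G)\;\leq\;\gc(G-e)\;=\;\gwc(G-e)\;\leq\;\gc(G)+1\;=\;\gwc(G)+1,
\]
which is the desired inequality.

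There is no real obstacle here; the entire content has been absorbed into Corollary~\ref{cut_simplicial}, Lemma~\ref{lem1}, and Lemma~\ref{lem3}. The one thing to verify is merely that the hypotheses of all three results are satisfied simultaneously, which is immediate: the structural assumption on $G$ is shared by all three, and the non-cut-edge assumption on $e$ is shared by the two lemmas (Corollary~\ref{cut_simplicial} needs no assumption on $e$ at all).
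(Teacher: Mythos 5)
Your proof is correct and follows exactly the route the paper intends: the paper derives this corollary precisely by combining Corollary~\ref{cut_simplicial} (giving $\gwc(G)=\gc(G)$), Lemma~\ref{lem3} (giving $\gwc(G-e)=\gc(G-e)$), and Lemma~\ref{lem1} (the two-sided bound on $\gc$). Your chain of inequalities makes explicit what the paper leaves as an immediate consequence, and the hypothesis check is accurate.
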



\subsection{Unicyclic graphs}

\begin{obs}\label{obs1} Let $G$ be an unicyclic graph  with the only cycle $C_p$ and let $uv$ be any cycle edge. Thus 
\begin{enumerate}
\item  $G-uv$ is a   spanning tree $T$ of $G$; 
\item $V_L(G)\subseteq V_L(T)$; 
\item   $v\in V_L(T)-V_L(G)$  if and only if $d_G(v)=2.$
\end{enumerate}
\end{obs}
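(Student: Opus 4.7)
The plan is to handle each of the three parts in sequence, since each follows almost immediately from the structural hypothesis that $G$ is unicyclic and that $uv$ lies on the unique cycle.

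For (1), I would start from the fact that $G$ is connected and contains exactly one cycle, namely $C_p$, and that $uv$ is an edge of this cycle. Removing $uv$ destroys $C_p$ (since $C_p$ was the only cycle of $G$, no cycle remains), and connectedness is preserved because in $C_p$ the endpoints $u$ and $v$ are still joined by the path $C_p - uv$. Hence $G - uv$ is a connected acyclic graph on the same vertex set $V(G)$, i.e.\ a spanning tree $T$ of $G$.

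For (2), let $w \in V_L(G)$, so that $d_G(w) = 1$. Since every vertex on the cycle $C_p$ has degree at least $2$, $w$ does not lie on $C_p$, and in particular $w \notin \{u,v\}$. Removing the edge $uv$ only affects the degrees of $u$ and $v$, so $d_T(w) = d_G(w) = 1$, which gives $w \in V_L(T)$ and hence $V_L(G) \subseteq V_L(T)$.

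For (3), the key observation is that the vertex $v$ (the endpoint of the removed edge $uv$) lies on $C_p$, so $d_G(v) \geq 2$ and consequently $v \notin V_L(G)$. Thus $v \in V_L(T) - V_L(G)$ is equivalent to $v \in V_L(T)$, i.e.\ $d_T(v) = 1$. Since $d_T(v) = d_G(v) - 1$, this occurs precisely when $d_G(v) = 2$, giving the required equivalence. There is no real obstacle here; the only thing to be careful about is keeping the dual use of the symbol $v$ (as the specific endpoint of the removed edge) separate from a generic vertex, and invoking the unicyclicity to know that cycle vertices have degree $\geq 2$ in $G$.
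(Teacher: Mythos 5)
Your proof is correct and is exactly the routine degree-counting argument the authors evidently had in mind: the paper states this as an Observation with no proof at all, treating all three parts as immediate. Nothing further is needed; your care in noting that only $u$ and $v$ change degree, and that $v\in V(C_p)$ forces $d_G(v)\geq 2$, covers the only points worth checking.
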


\begin{thm} \label{unicycle}Let $G$ be a connected unicyclic graph with the only cycle $C_p$.   If $e=uv$ is a cycle edge then $$\gwc(G) - 2 \leq  \gwc(G-e) \leq \gwc(G) + 2.$$
\end{thm}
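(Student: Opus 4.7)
My plan begins by noting that $T := G - e$ is a spanning tree of $G$, since removing any cycle edge of a unicyclic graph destroys its unique cycle while preserving connectivity. For any tree we have $\gwc(T) = n - n_L(T)$, realized by the set $D := V(T) - V_L(T)$ of non-leaves. Since both $u$ and $v$ lie on the cycle, $d_G(u), d_G(v) \geq 2$; deleting $uv$ reduces each of their degrees by exactly one, so an endpoint of $e$ becomes a leaf of $T$ iff it had degree~$2$ in $G$. In particular $n_L(G) \leq n_L(T) \leq n_L(G) + 2$.

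For the lower bound $\gwc(G) \leq \gwc(G-e) + 2$, I plan to check that $D' := D \cup \{u, v\}$ is a weakly convex dominating set of $G$. Domination in $G$ is inherited from $T$. For weak convexity, take $a, b \in D'$ and consider two cases. If some $(a-b)$-geodesic in $G$ avoids the edge $uv$, it is a path in $T$ and hence must coincide with the unique $(a-b)$-path in $T$; its interior vertices have degree $\geq 2$ in $T$, so they belong to $D \subseteq D'$. Otherwise every $(a-b)$-geodesic in $G$ uses $uv$, and one such geodesic decomposes (after possibly swapping $u$ and $v$) as a $T$-path from $a$ to $u$, the edge $uv$, and a $T$-path from $v$ to $b$; since sub-paths of geodesics are geodesics and paths in a tree are unique, each sub-path is the corresponding unique $T$-geodesic with interior in $D$, while $u, v \in D'$. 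This gives $\gwc(G) \leq |D'| \leq |D| + 2 = \gwc(G-e) + 2$.

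For the upper bound $\gwc(G-e) \leq \gwc(G) + 2$, the plan is to combine the tree formula with the inequality $\gwc(G) \geq \gc(G) \geq n - n_L(G) - 2$. The right-hand inequality is the main unicyclic content: by Observation~\ref{ob1}, any connected dominating set $D$ of $G$ must contain every cut-vertex, and $D \cap V(C_p)$ must form a single sub-arc of the cycle, because the trees attached to the cycle cannot link two disjoint cycle-arcs inside $G[D]$. The complementary omitted sub-arc consists of cycle vertices of degree~$2$, each of which must be dominated by a neighbor in $D$; only the endpoints of the omitted arc have such a neighbor, so the arc contains at most $2$ vertices. Combined with $n_L(T) \geq n_L(G)$, this chains to
\[\gwc(G-e) \;=\; n - n_L(T) \;\leq\; n - n_L(G) \;\leq\; \gc(G) + 2 \;\leq\; \gwc(G) + 2.\]
The delicate part is the weak-convex verification in the lower bound --- confirming that whenever a $G$-geodesic exploits the shortcut $uv$, both of its pieces on either side of $uv$ really do equal the unique $T$-geodesics with interior vertices in $D$. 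The whole argument hinges on the tree structure of $T$ together with the elementary fact that sub-paths of geodesics are themselves geodesics.
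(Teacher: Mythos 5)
Your proposal is correct, and it takes a genuinely different route from the paper. The paper fixes $D_0=V-V_L(T)$ for the spanning tree $T=G-e$ and then runs an exhaustive case analysis on $|D_0\cap\{u,v\}|$ and on the cycle length ($p\geq 7$; $p\in\{4,5,6\}$ with or without two consecutive degree-$2$ cycle vertices; $p=4$; $p=3$), in each sub-case exhibiting a minimum weakly convex dominating set of $G$ and computing $\gwc(T)-\gwc(G)$ exactly, so that the union of attainable values is $\{-2,\dots,2\}$. You instead prove the two inequalities separately and structurally: for $\gwc(G)\leq\gwc(T)+2$ you verify that $D\cup\{u,v\}$ is weakly convex dominating in $G$, using that a $G$-geodesic either avoids $uv$ (and is then the unique $T$-path, with non-leaf interior) or splits at $uv$ into two $T$-paths --- this checks out; for $\gwc(T)\leq\gwc(G)+2$ you chain $\gwc(T)=n-n_L(T)\leq n-n_L(G)\leq\gc(G)+2\leq\gwc(G)+2$, where the middle step rests on the (correct) unicyclic facts that every path between two cycle vertices stays on the cycle, hence a minimum connected dominating set meets the cycle in a single arc whose complement consists of at most two degree-$2$ vertices. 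Your argument is shorter, avoids the case split on $p$ entirely, and isolates the unicyclic content in one clean lemma about $\gc$; the paper's longer analysis buys the precise value of $\gwc(G)-\gwc(T)$ in each configuration rather than just the two-sided bound. One small point worth making explicit in a final write-up: both arguments rely on the fact that for a tree $T$ on at least three vertices the set of non-leaves is a \emph{minimum} weakly convex dominating set (so $\gwc(T)=n-n_L(T)$ exactly), which follows from Observation~\ref{ob1} since non-leaves of a tree are cut-vertices and leaves are simplicial.
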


\begin{proof}  
  Note that $G-e$ is a   spanning tree $T$ of $G.$ 
  We have that 
 $D_0=V-V_L(T)$ is  a minimum weakly convex dominating set of $T$ (Observation \ref{ob1}). 
Denote $a = \gwc(G).$
  We analyze the following three cases:

$Case$ $1.$  If $u,v \in D_0,$ then, since $u,v \notin  V_L(T)$, $d_G(u)\geq 3$  and $d_G(v)\geq 3.$ Thus $V_L(T)=V_L(G)$ and $D_0=V-V_L(G)$.

If $C=C_p$ with $p\geq 7$ then $V-V_L(G)$ is a minimum weakly convex dominating set of $G$ and   $\gwc(G)=|D_0|=\gwc(G-e).$ 

If  $p=4,5,6$ and there are $x,y$  two consecutive  vertices in $C$ with degree two then $V-(V_L(G)\cup  \{  x,y\})$ is a minimum weakly convex dominating set of $G,$ and $\gwc(G) = |D_0|-2=\gamma_{wcon}(T)-2$. 
In other case, for $p=5,6$, $D_0$ is a minimum  weakly convex dominating set of $G,$ so $\gwc(G)=|D_0|=\gwc(T).$ For $p=4$ if there exists a vertex in $C$ of degree $2$, then $\gwc(G)=|D_0| - 1=\gwc(T) - 1$, otherwise $\gwc(G)=|D_0|=\gwc(T)$.

If $p=3$, then denote the third vertex on the $C_3$ by $w$. Depending on $w\in V_C(G)$ or $w\not \in V_C(G)$, $D_0$ or $D_0-\{w\}$ is a minimum weakly convex dominating set of $G$.

  Thus, in this case, $\gwc(T)\in \{ a, a+1,a+2\}.$

$Case$ $2.$ Assume $|D_{0}\cap \{u,v\}|=1,$ without loss of generality let $u\in D_0, v\in V-D_0.$ Then $d_G(u)\geq 3$ and $d_G(v)=2.$ Note that $v\in V_L(T)-V_L(G).$ We have that $D_0=V-(V_L(G)\cup \{ v\})$ is a minimum weakly convex dominating set of $T.$ 

If $C=C_p$ with $p\geq 7$ then $V-V_L(G)$ is a minimum weakly convex dominating set of $G$  and   $\gwc(G)-1 = |D_0| =\gwc(T).$

If  $p=4,5,6$ and there are $x,y$  two consecutive  vertices   in $C$ with degree two then $V-(V_L(G)\cup  \{  x,y\})$ is a minimum weakly convex dominating set of $G$ 
and $\gwc(G) +1= |D_0|=\gwc(T).$
In other case, for $p=5,6$, $V-V_L(G) $ is a minimum weakly convex dominating set of $G$, i.e., $\gwc(G)-1= \gwc(T)$, and for $p=4$, $D_0$ is a minimum weakly convex dominating set of $G$, i.e.  $\gwc(G)= \gwc(T)$.

If $p=3$, then denote the third vertex on the $C_3$ by $w$. Depending on $w\in V_C(G)$ or $w\not \in V_C(G)$,  $D_0$ or $D_0-\{w\}$ is a minimum weakly convex dominating set of $G$.

   Thus, in this case, $\gwc(T)\in \{ a-1, a, a+1\}.$  

 $Case$ $3.$ Now let $u,v \in V-D_0.$ Then $d_G(u) =2=d_G(v).$ Note that $u,v\in V_L(T)-V_L(G).$ We have that $D_0=V-(V_L(G)\cup \{ u,v\})$ is a minimum weakly convex dominating set of $T.$ 

If $C=C_p$ with $p\geq 7$, then $V-V_L(G)$ is a minimum weakly convex dominating set of $G$  and   $\gwc(G)-2 = |D_0| =\gwc(T).$
If  $p=4,5,6$, then,  since there are  two consecutive  vertices $u,v$  in $C$  with degree two, $V-(V_L(G)\cup  \{ u,v\})$ is also a minimum weakly convex dominating set of $G$ 
and
 $\gwc(G)  =  \gwc(T).$
 
   Thus, in this case, $\gwc(T)\in \{ a, a-2\}.$ 
 
 We conclude that,   $\gwc(T) \in \{a-2, a-1, a, a+1, a+2\}$ for every spanning tree $T.$
\end{proof}

Let $\mathcal{T}$ be the set of all spanning trees of $G.$ We say that function $\Pi$ with
integer values {\em interpolates over graph} $G$ if and only if $\Pi(\mathcal{T}(G))$ is an interval; i.e the set $\Pi(\mathcal{T}(G)) = \{ \Pi(T) : T \in \mathcal{T}(G)\}$ consists of consecutive integers. 
Function $\Pi$ is an {\em interpolating function} if $\Pi$ interpolates over every connected
graph.
The interpolation properties of some domination parameters were presented
in \cite{kuziak} and
in particular,  it  was shown that $\gamma_c$ is an interpolating function.

In \cite{topp} Topp and Vestergaard proved the following theorem.
\begin{thm}\label{topp_vest} \cite{topp}
  Function $\Pi$ with integer values is an interpolating function if and
only if $\Pi$ interpolates over every unicyclic graph.
\end{thm}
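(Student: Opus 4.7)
The forward direction is immediate from the definition, since every unicyclic graph is connected; so the content lies in the converse, which I will focus on. Assume $\Pi$ interpolates over every unicyclic graph, fix a connected graph $G$ on $n$ vertices, and pick two spanning trees $T_1, T_2 \in \mathcal{T}(G)$ with, say, $\Pi(T_1) \leq \Pi(T_2)$. The goal is to show that every integer $c$ with $\Pi(T_1) \leq c \leq \Pi(T_2)$ belongs to $\Pi(\mathcal{T}(G))$.

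The plan is a standard tree-exchange reduction. By the basis-exchange property of the graphic matroid (equivalently, iterated fundamental-cycle swaps), there is a chain of spanning trees $T_1 = S_0, S_1, \ldots, S_k = T_2$ of $G$ such that for each $i$ there exist edges $e_i \in S_i \setminus S_{i+1}$ and $f_i \in S_{i+1} \setminus S_i$ with $S_{i+1} = (S_i \setminus \{e_i\}) \cup \{f_i\}$. For each $i$ put $U_i = S_i \cup \{f_i\}$, viewed as a spanning subgraph of $G$. Then $U_i$ is connected and has $n$ edges, so it contains exactly one cycle, namely the fundamental cycle of $f_i$ with respect to $S_i$; hence $U_i$ is unicyclic. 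Moreover $S_i$ and $S_{i+1}$ are both spanning trees of $U_i$, and every spanning tree of $U_i$ is automatically a spanning tree of $G$, so $\Pi(\mathcal{T}(U_i)) \subseteq \Pi(\mathcal{T}(G))$.

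Applying the hypothesis to each $U_i$, the set $\Pi(\mathcal{T}(U_i))$ is an interval of integers containing both $\Pi(S_i)$ and $\Pi(S_{i+1})$; in particular every integer between $\Pi(S_i)$ and $\Pi(S_{i+1})$ is attained by a spanning tree of $G$. Concatenating these intervals for $i = 0, 1, \ldots, k-1$, note that consecutive intervals share the endpoint $\Pi(S_i)$, so their union is itself an interval spanning from $\min_i \Pi(S_i)$ to $\max_i \Pi(S_i)$, which includes both $\Pi(T_1)$ and $\Pi(T_2)$. Thus every $c$ with $\Pi(T_1) \leq c \leq \Pi(T_2)$ lies in $\Pi(\mathcal{T}(G))$, finishing the proof.

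The main obstacle is the matroid exchange step: one must guarantee that \emph{every} pair of spanning trees of $G$ can be connected by single-edge swaps whose intermediate unicyclic graphs sit inside $G$. This is classical (it is essentially Whitney's theorem on bases of the cycle matroid), but it must be invoked cleanly, since without it the chain $(S_i)$ and the unicyclic subgraphs $U_i$ would not exist and the reduction to the hypothesis would fail. Once that step is accepted, the rest of the argument is just the observation that a finite union of overlapping integer intervals is an integer interval.
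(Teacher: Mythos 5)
Your proof is correct. Note that the paper does not actually prove this statement: it is quoted from Topp and Vestergaard's article, so there is no in-paper argument to compare against. Your argument is precisely the standard one behind such interpolation meta-theorems (and the one in the cited source): connect any two spanning trees $T_1,T_2$ of $G$ by a chain of single-edge exchanges, observe that each exchange lives inside a unicyclic spanning subgraph $U_i$ whose spanning trees are spanning trees of $G$, apply the hypothesis to each $U_i$ to get integer intervals, and glue the intervals along the shared values $\Pi(S_i)$. The only step that needs external justification, the existence of the exchange chain, is the classical basis-exchange property of the graphic matroid, which you invoke correctly; all the unicyclicity and containment claims ($U_i$ has $n$ edges and is connected, $S_i,S_{i+1}\in\mathcal{T}(U_i)$, $\mathcal{T}(U_i)\subseteq\mathcal{T}(G)$) check out, and the final union-of-overlapping-intervals observation is sound.
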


Now, we are in condition to   show that weakly convex domination number is also an interpolating function.

\begin{wn} $\gwc$ is an interpolating function.
\end{wn}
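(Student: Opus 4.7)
The plan is to invoke Theorem \ref{topp_vest}, which reduces the claim to showing that $\gwc$ interpolates over every unicyclic graph. So let $G$ be a unicyclic graph of order $n$ with cycle $C_p = (v_1, v_2, \ldots, v_p, v_1)$, and write $e_i = v_iv_{i+1}$ (indices modulo $p$). Since $G$ has $n$ edges and a spanning tree has $n-1$ edges, and only the removal of a cycle edge preserves connectivity, the set of spanning trees is precisely $\mathcal{T}(G) = \{T_i := G - e_i : 1 \leq i \leq p\}$.

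The key observation is that each $T_i$ is a tree, so $g(T_i) = \infty \geq 7$, and Theorem \ref{girth}(1) applies to give $\gwc(T_i) = n - n_L(T_i)$. Removing the edge $e_i$ changes only the degrees of $v_i$ and $v_{i+1}$, so a vertex becomes a new leaf in $T_i$ exactly when it is $v_i$ or $v_{i+1}$ and has degree $2$ in $G$, while no existing leaf of $G$ is lost (leaves of $G$ cannot be cycle vertices). Setting $\delta_j = 1$ if $d_G(v_j) = 2$ and $\delta_j = 0$ otherwise, this yields $n_L(T_i) = n_L(G) + \delta_i + \delta_{i+1}$.

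For adjacent cycle edges the shared term cancels, giving $\gwc(T_{i+1}) - \gwc(T_i) = \delta_i - \delta_{i+2} \in \{-1, 0, 1\}$. Hence going around the cycle produces a cyclic sequence of $p$ integers in which consecutive terms differ by at most one, so by the integer intermediate value theorem the image $\{\gwc(T_i) : 1 \leq i \leq p\}$ is an interval of consecutive integers. This establishes interpolation for $G$, and Theorem \ref{topp_vest} then yields the result for every connected graph.

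The main subtlety is simply the justification for applying Theorem \ref{girth}(1) in the tree setting, which is handled by the convention $g(T) = \infty \geq 7$; once this is granted, the leaf-count argument is uniform across all cycle lengths $p \geq 3$ and no case analysis by $p$ is required. The much coarser bound $|\gwc(G-e) - \gwc(G)| \leq 2$ from Theorem \ref{unicycle} would not suffice on its own, so the sharp tree formula $\gwc(T) = n - n_L(T)$ is what makes the proof go through cleanly.
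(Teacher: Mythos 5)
Your proof is correct, but it follows a genuinely different route from the paper's. The paper derives the corollary from Theorem~\ref{unicycle}, which compares each spanning tree $T=G-e$ of a unicyclic graph $G$ to $G$ itself and concludes only that $\gwc(T)\in\{a-2,\dots,a+2\}$ where $a=\gwc(G)$; membership in a five-element set does not by itself show that $\{\gwc(T): T\in\mathcal{T}(G)\}$ is an interval, so the paper's one-line deduction implicitly leans on the finer case analysis inside that theorem. You instead compare \emph{adjacent} spanning trees $T_i=G-e_i$ and $T_{i+1}=G-e_{i+1}$ directly: the exact tree formula $\gwc(T)=n-n_L(T)$ together with the leaf count $n_L(T_i)=n_L(G)+\delta_i+\delta_{i+1}$ gives $|\gwc(T_{i+1})-\gwc(T_i)|\le 1$, and the discrete intermediate value theorem applied to the cyclic sequence then yields interval-ness cleanly and uniformly in $p$. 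This buys a self-contained and arguably tighter argument that sidesteps Theorem~\ref{unicycle} entirely; the only cost is a citation issue: Theorem~\ref{girth}(1) is stated for connected graphs with $g(G)\ge 7$, and the paper defines girth as the length of a shortest cycle, so it does not literally cover acyclic graphs, and its proof (building on Theorem~\ref{twr2}) is not designed for them. You should instead invoke the formula $\gwc(T)=n-n_L$ for trees on at least three vertices directly --- it is already used in the paper's proof of the bound $\gwc(G)\le 2m-n$, and it also follows from the distance-hereditary lemma combined with $\gc(T)=n-n_L$. With that substitution your argument is complete.
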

\begin{proof}
Using Theorem \ref{unicycle} we have that $\gwc(T) \in \{a-2, a-1, a, a+1, a+2\}$ for every spanning tree $T,$ so by Theorem \ref{topp_vest} we have that $\gwc$ is an interpolation function.
 \end{proof}

  \section{Acknowledgment} 
 The third author thankfully acknowledge support
by National Science Centre (Poland) grant number 2015/17/B/ST6/01887.  The fourth author thankfully acknowledge support
from       TIN2017-85160-C2-1-R from MIMECO of Spain.

\end{document}